\documentclass[a4paper,12pt]{article}
\usepackage{amsmath,fancyhdr,amssymb,amsthm, geometry, enumerate, graphicx ,amsfonts,hyperref,mathrsfs}
\title{ $\beta$-$n$-sensitivity and other stronger forms of sensitivity in dynamical systems}
\author{ Hongbo Zeng$^{a*}$}

\title{ $\beta$-$n$-sensitivity and other stronger forms of sensitivity in dynamical systems}
\theoremstyle{definition}

\providecommand{\keywords}[1]{\textbf{Keywords :} #1}

\theoremstyle{plain}

\newtheorem{definition}{Definition}
\newtheorem{lemma}{Lemma}
\newtheorem{remark}{Remark}

\newtheorem{theorem}{Theorem}
\newtheorem{example}{Example}
\newtheorem{corollary}{Corollary}
\begin{document}
\date{}
\maketitle

\begin{abstract}
In this paper, we investigate the relationships between the $\beta$-$n$-sensitivity and other forms of sensitivity in discrete dynamical systems. And we present some different sufficient conditions (equivalent condition) to be (strongly) $\beta$-$n$-sensitive. These results improve and extend some existing ones. Moreover, we introduce and study the semi-strong $\beta$-$n$-sensitivity and (strong) multi-$\beta$-$n$-sensitivity with respect to $\alpha$.  Besides, we proved that strongly multi-transitivity implies strongly multi-sensitivity, which improves some existing ones. Finally, we obtain that (semi-)strongly $\beta$-$n$-sensitivity is preserved under iterations but $\beta$-$n$-sensitivity fails.

\end{abstract}

\keywords{$\beta$-$n$-sensitivity; multi-sensitive with respect to vector, multi-transitivity, iteration invariant}

\bigskip\renewcommand{\thefootnote}{\fnsymbol{footnote}}
\footnotetext{\hspace*{-5mm}
\renewcommand{\arraystretch}{1}
\begin{tabular}{@{}r@{}p{11cm}@{}}
$^*$& the corresponding author. \emph{Email addresses}: zenghongbo@csust.edu.cn.\\
$^a$&School of Mathematics and Statistics, Changsha University of Science and Technology, Changsha, P.R. China.\\


\end{tabular}}
\vspace{-2mm}
\section{Introduction}

Dynamical systems theory is an effective mathematical mechanism which describes the time dependence of a point in a geometric space and has remarkable connections with different areas of mathematics such as topology and number theory. It is used to deal with the complexity, instability, or chaoticity in the real world, such as in meteorology, ecology, celestial mechanics, and other natural sciences. In recent years, more and more scholars have begun to devote themselves to the research in topological dynamical systems, and have achieved many significant results (see \cite{q12,q7}). The chaos theory is one of the most important components of dynamical systems, which was first strictly defined by Li and Yorke in 1975 \cite{a1a}. Since then, different people from different fields gave different definitions of chaos under their understanding of the subject. In 1986, Devaney proposed the widely accepted definition of chaos \cite{a3a}. Later, Banks et al found that in the three conditions defining Devaney chaos, topological transitivity and dense periodic points together imply sensitivity \cite{q8}.  Sensitive dependence on initial conditions(briefly, sensitivity), first defined in \cite{q9}, is also one of the most remarkable components of dynamical systems theory, which is closely linked to different variants of chaos. It characterizes the unpredictability in chaotic phenomena and is an integral part of different types of chaos. Afterwards, Li-Yorke sensitivity \cite{q10} and some stronger forms of sensitivity (including cofinite sensitivity, multi-sensitivity, and syndetic sensitivity) \cite{q11} were successively proposed. And various related
yet distinct definitions of sensitivity have been proposed from different perspectives. For
instance, from the perspective of multi-variable versions, there exist $n$-sensitivity \cite{b1,b2}, thick $n$-sensitivity and block thick $n$-sensitivity \cite{b3}. From the standpoint of trajectory iteration step lengths, there are $(r,s)$-sensitivity \cite{b4} and $(r,s)$-Li-Yorke sensitivity \cite{b5}. Recently, Yang \cite{a1}  provide a broader definition of sensitivity, namely strong multi-$\beta$-$n$-sensitivity and $\beta$-$n$-sensitivity.

 Currently, sensitivity and stronger forms of sensitivity are widely studied in dynamical systems  \cite{b6,b7,b8,b9,b10,b11,b12,b13,b14,b15}. For example, Mai \cite{b10} described sensitivity with asymptotically almost periodic points, uniformly segment-recurrent points and allured sets. Fedeli \cite{b6} proved that if  $f$ is a weakly mixing self-map on a space  X which is disconnected or a Urysohn space with at least two points, then  $f$ is topologically sensitive. Huang \cite{b7} showed that a minimal system is either multi-sensitive or an almost one-to-one extension of its maximal equicontinuous
factor. Li \cite{b9} obtained sufficient and necessary conditions for thick n-sensitivity and
blockily thick $n$-sensitivity for minimal group actions. Mahajan \cite{b11} obtained some sufficient conditions for a dynamical system to be  F -sensitive and for a semiflow to be multi-sensitive with respect to a vector.  Yang \cite{a1} proved that chain mixing systems with the shadowing property exhibit
strong strong multi-$\beta$-$n$-sensitivity and strong $\beta$-$n$-sensitivity under the condition of surjection.

Motivated by the above results, this paper is devoted to further study the properties of $\beta$-$n$-sensitivity and other stronger forms of sensitivity in dynamical systems.

This paper is organized as follows. In Section 2, we will first state some preliminaries, definitions and some lemmas. The main conclusions will be given in Section 3. This paper is concerned with the relationships between the $\beta$-$n$-sensitivity and other forms of sensitivity in dynamical systems.  we give some different sufficient conditions  to be (strongly) $\beta$-$n$-sensitive and some equivalent conditions to be cofinitely strongly $\beta$-$n$-sensitive (thickly syndetically strongly $\beta$-$n$-sensitive).  Meanwhile, we introduce and study the semi-strong $\beta$-$n$-sensitivity and (strong) multi-$\beta$-$n$-sensitivity with respect to $\alpha$.  Besides, we proved that strongly multi-transitivity implies strongly multi-sensitivity, which improves the result in  \cite{b16}. Finally, we obtain that (semi-)strongly $\beta$-$n$-sensitivity is preserved under iterations but $\beta$-$n$-sensitivity fails.

\section{Preliminary}

In this section, we mainly give some different concepts of sensitivity (see, for example, \cite{q11,b4,a1,b16}) and some lemmas required for remaining sections of the paper.

Assume that $\mathbb{N}=\{1,2,3,...\}$ . Let $(X,d)$ be a metric
space and $f: X \rightarrow X$ be a continuous function. An  discrete
dynamical system is a pair $(X, f)$. For any $n\in \mathbb{N}$,
define the composition
$f^n:= f \circ \cdot\cdot\cdot \circ f$. The orbit of
a point $x$ in $X$ is the set
$orb(x, f):=\{x,f^1(x),f^2(x),...,f^n(x),...\}$. A set $A\subseteq \mathbb{N}$ is called syndetic if there exists a positive integer $M$ such that $\{i,i+1,...,i+M\}\cap A\neq\emptyset$ for every $i\in \mathbb{N}$, i.e. it has bounded gaps. A set $A\subseteq \mathbb{N}$ is called cofinite if there exists $N\in \mathbb{N}$ such that $A\supseteq[N,\infty)\cap \mathbb{N}$. A set $A\subseteq \mathbb{N}$ is called thick if it contains arbitrarily long runs of positive integers, that is, for any $p\in \mathbb{N}$, there exists some $n\in \mathbb{N}$ such that $\{n,n+1,...,n+p\}\subseteq A$. A set $A\subseteq \mathbb{N}$ is called thickly syndetic if $\{m\in \mathbb{N}: m+j\in A, \text{for } 0\leq j\leq l\}$ is syndetic for any $l\in \mathbb{N}$. In this paper, we always suppose that $(X, f)$  is a discrete dynamical system and that $n$ is a positive integer with $n\geq2$ unless otherwise specified.

Let $U,V\subseteq X$, $\delta>0$, $r,s\in \mathbb{N}$ and $\beta=(m_1,m_2,...,m_n)\in \mathbb{N}^n$. Denote by 
$$N_{f}(U,V)=\{k\in\mathbb{N}\mid f^(U)\cap V\neq\emptyset\},$$
$$N_{f}(U,r,s,\delta)=\{k\in\mathbb{N}\mid \exists x,y\in U  \text{ such that }  d(f^{rk}(x),f^{sk}(y))>\delta\},$$
$$N_{f}(U,n,\delta)=\{k\in\mathbb{N}\mid \exists x_1,x_2,...,x_n\in U  \text{ such that }  \min_{1\leq i<j\leq n}d(f^{k}(x_i),f^{k}(x_j))>\delta\},$$
$$N_{f}(U,\beta,\delta)=\{k\in\mathbb{N}\mid \exists x_1,x_2,...,x_n\in U  \text{ such that }  \min_{1\leq i<j\leq n}d(f^{m_ik}(x_i),f^{m_jk}(x_j))>\delta\}.$$

\begin{definition}

A topological dynamical system $(X, f)$  is said to be (topologically) transitive, if for any two nonempty subsets $U,V\subseteq X$, $N_{f}(U,V)$  is nonempty. Equivalently, if for any two nonempty subsets $U,V\subseteq X$, there exists $n\in \mathbb{N}$ such that $f_1^n(U)\cap V\neq\emptyset$.

A topological dynamical system $(X, f)$  is said to be weakly mixing, if the product system $(X^2,f^{2})$ is transitive. Equivalently, if for any four nonempty subsets $U_1,U_2,V_1,V_2\subseteq X$, there exists $n\in \mathbb{N}$ such that $f^n(U_1)\cap V_1\neq\emptyset$ and $f^n(U_2)\cap V_2\neq\emptyset$.

A topological dynamical system $(X, f)$ is said to be mixing, if for any two nonempty subsets $U,V\subseteq X$, $N_{f}(U,V)$  is cofinite. Equivalently, if for any two non-empty subsets $U,V\subseteq X$, there exists $N\in \mathbb{N}$ such that $f_1^n(U)\cap V\neq\emptyset$ for any $n\geq N$.

A topological dynamical system $(X, f)$   is said to be totally transitive, if for any $n\in \mathbb{N}$, $f^{n}$ is transitive.

Let $\alpha=(a_1,a_2,...,a_r)\in\mathbb{N}^r$.  A topological dynamical system $(X, f)$   is said to be multi-transitive with respect to $\alpha$, if the product system $(X^r,f^{(\alpha)})$ is transitive, where $f^{(\alpha)}=f^{a_1}\times f^{a_2}\times\cdot\cdot\cdot f^{a_r}$. 

A topological dynamical system $(X, f)$ is said to be multi-transitive if it is multi-transitive with respect to $(1,2,...,n)$ for any $n\in\mathbb{N}$. Equivalently, if for any $m\in \mathbb{N}$ and for any collection of nonempty open subsets $U_1,U_2,...,U_m;V_1,V_2,...,V_m$ of $X$, there exists $l\in \mathbb{N}$ such that  $f^{il}(U_i)\cap V_i\neq\emptyset$ for each $i\in\{1,2,...,m\}$.

A topological dynamical system $(X, f)$ is said to be strongly multi-transitive if it is multi-transitive with respect to any vector in $\mathbb{N}^n$ and any $n\in\mathbb{N}$.

A topological dynamical system $(X, f)$ is said to be minimal if the orbit of every $x\in X$ is dense in $X$.

\end{definition}

\begin{definition}
A topological dynamical system $(X, f)$ is said to be sensitive if there is a constant  $\delta>0$ such that for any nonempty open subset $U$ of $X$, $N_{f}(U,2,\delta)$ is nonempty.

A topological dynamical system $(X, f)$ is said to be cofinitely sensitive (syndeticly sensitive, thickly sensitive, thickly syndeticly sensitive, respectively) if there is a constant  $\delta>0$ such that for any nonempty open subset $U$ of $X$, $N_{f}(U,2,\delta)$ is  a syndetic set (syndetic set, thick set, thickly syndetic set, respectively).

\end{definition}

\begin{definition}
A topological dynamical system $(X, f)$ is said to be $n$-sensitive if there is a constant  $\delta>0$ such that for any nonempty open subset $U$ of $X$, $N_{f}(U,n,\delta)$ is nonempty. 

A topological dynamical system $(X, f)$ is said to be cofinitely $n$-sensitive (syndeticly $n$-sensitive, thickly $n$-sensitive, thickly syndeticly $n$-sensitive, respectively) if there is a constant  $\delta>0$ such that for any nonempty open subset $U$ of $X$, $N_{f}(U,n,\delta)$ is a syndetic set (syndetic set, thick set, thickly syndetic set, respectively).

A topological dynamical system $(X, f)$ is said to be multi-$n$-sensitive if there is a constant  $\delta>0$ such that for any $m\in\mathbb{N}$ and any nonempty open subsets $U_1,...,U_m$ of $X$, the set $\cap_{1\leq i\leq m}N_{f}(U_i,n,\delta)$ is nonempty.

\end{definition}

\begin{definition}
A topological dynamical system $(X, f)$ is said to be $(r,s)$-sensitive if there is a constant  $\delta>0$ such that for any nonempty open subset $U$ of $X$, $N_{f}(U,r,s,\delta)$ is nonempty.

A topological dynamical system $(X, f)$ is said to be cofinitely sensitive (syndeticly sensitive, thickly sensitive, thickly syndeticly sensitive, respectively) if there is a constant  $\delta>0$ such that for any nonempty open subset $U$ of $X$, $N_{f}(U,r,s,\delta)$ is a cofinite set (syndetic set, thick set, thickly syndetic set, respectively).

\end{definition}

\begin{definition}
Let $n\in\mathbb{N}$ with $n\geq2$. Let $\beta=(m_1,m_2,...,m_n)\in \mathbb{N}^n$. $(X,f)$ is said to be multi-sensitive with respect to $\beta$, if there exists $\delta>0$ such that for any nonempty open sets $U_1,...,U_n$ of $X$, $N_{f^{m_1}}(U_1,\delta)\cap N_{f^{m_2}}(U_2,\delta)\cap...\cap N_{f^{m_n}}(U_n,\delta)\neq\emptyset$, that is, $\cap_{1\leq i\leq n}N_{f^{m_i}}(U_i,\delta)\neq\emptyset$.	$(X,f)$ is said to be  $\mathcal{N}$ sensitive, if $(X,f)$ is multi-sensitive with respect to $(1,2,...,n)$ for any $n\in \mathbb{N}$. $(X,f)$ is said to be strongly multi-sensitive if it is multi-sensitive with respect to any vector in $\mathbb{N}^n$ and any $n\in\mathbb{N}$.
\end{definition}

\begin{definition}
Let $n\in\mathbb{N}$ with $n\geq2$. Let $\beta=(b_1,b_2,...,b_n)\in \mathbb{N}^n$. \\
A dynamical system $(X,f)$ is said to be $\beta$-$n$-sensitive, if there exists $\delta>0$ such that for any nonempty open set $U$ of $X$, $N_{f}(U,\beta,\delta)$ is nonempty. \\
A dynamical system $(X,f)$ is said to be strongly $\beta$-$n$-sensitive, if $(X,f)$ is $\beta$-$n$-sensitive for any vector $\beta\in\mathbb{N}^n$ and any $n\in\mathbb{N}$.\\
A dynamical system $(X,f)$ is said to be cofinitely $\beta$-$n$-sensitive  (syndeticly $\beta$-$n$-sensitive, thickly $\beta$-$n$-sensitive, thickly syndeticly $\beta$-$n$-sensitive, respectively), if there exists $\delta>0$ such that for any nonempty open set $U$ of $X$, $N_{f}(U,\beta,\delta)$ is cofinite set cofinite set (syndetic set, thick set, thickly syndetic set, respectively).
\end{definition}

\begin{definition}
Let $n\in\mathbb{N}$ with $n\geq2$. Let $\beta=(b_1,b_2,...,b_n)\in \mathbb{N}^n$. A dynamical system $(X,f)$ is said to be multi-$\beta$-$n$-sensitive, if there exists $\delta>0$ such that for any $m\in\mathbb{N}$ and any nonempty open sets $U_1,...,U_m$ of $X$, $N_{f}(U_1,\beta,\delta)\cap N_{f}(U_2,\beta,\delta)\cap...\cap N_{f}(U_n,\beta,\delta)\neq\emptyset$, that is, $\cap_{i=1}^mN_{f}(U_i,\beta,\delta)\neq\emptyset$.	Moreover, $(X,f)$ is said to be strongly multi-$\beta$-$n$-sensitive, if $(X,f)$ is  multi-$\beta$-$n$-sensitive for any vector $\beta\in\mathbb{N}^n$ and any $n\in\mathbb{N}$.
\end{definition}

We wish to formulate stronger notions of sensitivity in a similar way. Now, motivated
by this idea, we introduce the notions of the semi-strong $\beta$-$n$-sensitivity and (strong) multi-$\beta$-$n$-sensitivity with respect to $\alpha$.

\begin{definition}
Let $n\in\mathbb{N}$ with $n\geq2$. Let $\beta=(b_1,b_2,...,b_n)\in \mathbb{N}^n$. A dynamical system  $(X,f)$ is said to be semi-strongly $\beta$-$n$-sensitive, if $(X,f)$ is  $\beta$-$n$-sensitive for any vector $\beta\in\mathbb{N}^n$. A dynamical system  $(X,f)$ is said to be semi-strongly multi-$\beta$-$n$-sensitive, if $(X,f)$ is  multi-$\beta$-$n$-sensitive for any vector $\beta\in\mathbb{N}^n$.
\end{definition}

\begin{remark}
By definitions, strong $\beta$-$n$-sensitivity implies semi-strong $\beta$-$n$-sensitivity.  Conversely, there is a  minimal system $(X,f)$ which is $2$-sensitive but not $3$-sensitive \cite[Example 3.8]{b2}, further, it is not strong $\beta$-$n$-sensitive, on the other hand, by \cite[Theorem 5]{b4}, the  minimal system $(X,f)$ is also semi-strongly $\beta$-$2$-sensitive, which shows that semi-strongly $\beta$-$n$-sensitivity is strictly weaker than strongly $\beta$-$n$-sensitivity.
Besides, the following Example \ref{ex1} shows that there is a  dynamical system $(X,f)$ which is $(2,4)$-2-sensitive but not semi-strongly $\beta$-$n$-sensitive, which shows that semi-strongly $\beta$-$n$-sensitivity is strictly stronger than $\beta$-$n$-sensitivity.
\end{remark}

\begin{definition}
Let $n\in\mathbb{N}$ with $n\geq2$. Let $\beta=(b_1,b_2,...,b_n)\in \mathbb{N}^n$ and $\alpha=(a_1,a_2,...,a_n)\in \mathbb{N}^n$. A dynamical system $(X,f)$ is said to be multi-$\beta$-$n$-sensitive with respect to $\alpha$, if there exists $\delta>0$ such that for any $m\in\mathbb{N}$ and any nonempty open sets $U_1,...,U_m$ of $X$, $N_{f^{a_1}}(U_1,\beta,\delta)\cap N_{f^{a_2}}(U_2,\beta,\delta)\cap...\cap N_{f^{a_n}}(U_n,\beta,\delta)\neq\emptyset$, that is, $\cap_{i=1}^mN_{f^{a_i}}(U_i,\beta,\delta)\neq\emptyset$.	Moreover, $(X,f)$ is said to be strongly multi-$\beta$-$n$-sensitive with respect to $\alpha$, if $(X,f)$ is  multi-$\beta$-$n$-sensitive with respect to $\alpha$ for any vector $\beta,\alpha\in\mathbb{N}^n$ and any $n\in\mathbb{N}$.
\end{definition}

\begin{lemma}\label{yinli1}
	Let $n\in\mathbb{N}$ with $n\geq2$. Let $A_i=\{x_{i1},x_{i2},...,x_{in}\}$, where $i=1,2,...,n$. If for each $i\in\{1,2,...,n\}$,
$$\min_{1\leq j<l\leq n}d(f^{m_ik}(x_{ij}),f^{m_ik}(x_{il}))>a,$$
then there exist $y_i\in A_i$ for each $i\in\{1,2,...,n\}$ such that
$$\min_{1\leq j<l\leq n}d(f^{m_jk}(y_{j}),f^{m_lk}(y_{l}))>\frac{a}{2}.$$

\end{lemma}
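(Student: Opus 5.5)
The plan is a greedy selection, choosing $y_1,y_2,\dots,y_n$ one at a time. The point is that each $A_i$ consists of $n$ points whose images under $f^{m_ik}$ are pairwise more than $a$ apart. A closed ball of radius $a/2$ can therefore contain the image of at most one of them. So at each stage the previously fixed points can block at most one candidate apiece.

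First I will set notation: for $j<i$ write $z_j=f^{m_jk}(y_j)$ for the already chosen images, and let $w_{il}=f^{m_ik}(x_{il})$ for $l=1,\dots,n$ be the candidate images in stage $i$. Next comes the key observation, which I take to be the only real content. Fix any point $z\in X$. Then at most one index $l$ satisfies $d(w_{il},z)\le a/2$. Indeed, if two indices $l\neq l'$ both did, the triangle inequality would give $d(w_{il},w_{il'})\le a$. This contradicts the hypothesis $\min_{1\le j<l\le n}d(f^{m_ik}(x_{ij}),f^{m_ik}(x_{il}))>a$.

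Then the induction runs as follows. Choose $y_1\in A_1$ arbitrarily. Suppose $y_1,\dots,y_{i-1}$ have been chosen with $d(z_j,z_{j'})>a/2$ for all $j<j'<i$. By the observation, each of the $i-1\le n-1$ points $z_1,\dots,z_{i-1}$ excludes at most one index $l$, so at most $n-1$ of the $n$ indices are excluded. Hence some $l$ satisfies $d(w_{il},z_j)>a/2$ for every $j<i$. Set $y_i=x_{il}$. After $n$ steps we obtain $\min_{1\le j<l\le n}d(f^{m_jk}(y_j),f^{m_lk}(y_l))>a/2$, as required.

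I expect no serious obstacle. The only point needing care is the counting: the bound of at most $n-1$ exclusions uses exactly that $|A_i|=n$ candidates with pairwise separated images. This is why the lemma is stated with $n$ sets of $n$ points each.
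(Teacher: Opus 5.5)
Your proof is correct and follows the same greedy selection as the paper. The paper likewise picks $y_1$ arbitrarily and uses the triangle inequality to show that each previously chosen image lies within $a/2$ of at most one candidate in $A_i$, so a valid $y_i$ survives at every stage; your count of at most $i-1\le n-1$ excluded indices is a cleaner bookkeeping of the paper's ``at least $n-1$, then $n-2$, \dots'' count of surviving points.
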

\begin{proof}
Step 1. Take $y_1\in A_1$. Then since
$$\min_{1\leq j<l\leq n}d(f^{m_2k}(x_{2j}),f^{m_2k}(x_{2l}))> a,$$
there exist  at least $n-1$ points $\{z_{21},z_{22},...,z_{2(n-1)}\}$ of $A_2$ such that
\begin{equation}\label{lem1}
\begin{aligned}
d(f^{m_2k}(z_{2j}),f^{m_1k}(y_{1}))> \frac{a}{2}
\end{aligned}
\end{equation}
for every $j=1,2,...n-1$. If not, there exist two points $z_{21},z_{22}\in A_2$ with $$d(f^{m_2k}(z_{2j}),f^{m_1k}(y_{1}))\leq \frac{a}{2}$$
for every $j=1,2$. By the triangle inequality, we have
$$d(f^{m_2k}(z_{21}),f^{m_2k}(z_{22}))\leq d(f^{m_2k}(z_{21}),f^{m_1k}(y_{1}))+d(f^{m_2k}(z_{22}),f^{m_1k}(y_{1}))\leq a,$$
which is a contradiction. Now take $y_2\in \{z_{21},z_{22},...,z_{2(n-1)}\}$.

 Step 2. With the similar argument, we have that there exist at least $n-1$ points $\{z_{31},z_{32},...,z_{3(n-1)}\}$ of $A_3$ such that  $$d(f^{m_3k}(z_{3j}),f^{m_1k}(y_{1}))> \frac{a}{2}$$
for every $j=1,2,...n-1$ and that there exist at least $n-1$ points $\{z_{31}',z_{32}',...,z_{3(n-1)}'\}$ of $A_3$ such that  $$d(f^{m_3k}(z_{3j}'),f^{m_1k}(y_{2}))> \frac{a}{2}$$
for every $j=1,2,...n-1$, which implies that there exist at least $n-2$ points $\{z_{31}'',z_{32}'',...,z_{3(n-2)}''\}$ of $A_3$ such that
\begin{equation}\label{lem2}
\begin{aligned}
d(f^{m_3k}(z_{3j}''),f^{m_1k}(y_{1}))> \frac{a}{2}
\end{aligned}
\end{equation}
and
\begin{equation}\label{lem3}
\begin{aligned}
d(f^{m_3k}(z_{3j}''),f^{m_1k}(y_{2}))> \frac{a}{2}
\end{aligned}
\end{equation}
for every $j=1,2,...n-2$. Now take $y_3\in \{z_{31}'',z_{32}'',...,z_{3(n-2)}''\}$. By (\ref{lem1}), (\ref{lem2}) and (\ref{lem3}), we have
$$\min_{1\leq j<l\leq 3}d(f^{m_jk}(y_{j}),f^{m_lk}(y_{l}))> \frac{a}{2}.$$

 Continuing like this, after $n-2$ steps we have that there exist $y_i\in A_i$ for each $i\in\{1,2,...,n-1\}$ such that
\begin{equation}\label{lem4}
\begin{aligned}
\min_{1\leq j<l\leq n-1}d(f^{m_jk}(y_{j}),f^{m_lk}(y_{l}))> \frac{a}{2}.
\end{aligned}
\end{equation}

Step $n-1$. Since $$\min_{1\leq j<l\leq n}d(f^{m_nk}(x_{nj}),f^{m_nk}(x_{nl}))> a,$$
there exist at least $n-1$ points $\{z_{31}^{i},z_{32}^{i},...,z_{3(n-1)}^{i}\}$ of $A_n$ such that  $$d(f^{m_nk}(z_{3j}^{i}),f^{m_1k}(y_{i}))> \frac{a}{2}$$
for every $j=1,2,...n-1$ and every $i=1,2,...,n-1$, which implies that there exists $y_n\in A_n$ such that
\begin{equation}\label{lem5}
\begin{aligned}
d(f^{m_nk}(y_{n}),f^{m_lk}(y_{l}))> \frac{a}{2}
\end{aligned}
\end{equation}
for each $l\in\{1,2,...,n-1\}$. By (\ref{lem4}) and (\ref{lem5}), we have that there exist $y_i\in A_i$ for each $i\in\{1,2,...,n\}$ such that
$$\min_{1\leq j<l\leq n}d(f^{m_jk}(y_{j}),f^{m_lk}(y_{l}))> \frac{a}{2}.$$

\end{proof}

\begin{lemma}\label{yinli2}
	Let $n\in\mathbb{N}$ with $n\geq2$ and let $t_i\in\mathbb{N}$, put $p_i=t_1+t_2+...+t_i+2$ where $i=1,2,...,n$. Let $A_i=\{x_{i1},x_{i2},...,x_{ip_i}\}$, where $i=1,2,...,n$. If for each $i\in\{1,2,...,n\}$,
$$\min_{1\leq j<l\leq p_i}d(f^{m_ik}(x_{ij}),f^{m_ik}(x_{il}))> a,$$
then there exist $z_{i1},...,z_{it_i}\in A_i$ for each $i\in\{1,2,...,n\}$ such that
$$\min_{jr\neq lq}d(f^{m_jk}(z_{jr}),f^{m_lk}(z_{lq}))> \frac{a}{2}$$
where $1\leq j,l\leq n, 1\leq r\leq t_j, 1\leq q\leq t_l$.

\end{lemma}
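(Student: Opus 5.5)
The plan is to run the same greedy selection as in the proof of Lemma \ref{yinli1}, but now picking $t_i$ points from $A_i$ at each stage instead of one. Everything rests on a single exclusion principle, which I would isolate first. For a fixed index $i$ and an arbitrary point $w\in X$, at most one element $x\in A_i$ can satisfy $d(f^{m_ik}(x),w)\leq \frac{a}{2}$. Indeed, if two distinct elements $x_{ij},x_{il}\in A_i$ did, the triangle inequality would give $d(f^{m_ik}(x_{ij}),f^{m_ik}(x_{il}))\leq a$. This contradicts the hypothesis $\min_{j<l}d(f^{m_ik}(x_{ij}),f^{m_ik}(x_{il}))>a$. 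This is exactly the argument of Step 1 in Lemma \ref{yinli1}.

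Next I would note that points taken from the same set $A_j$ need no further care. Any two distinct $z_{jr},z_{jq}\in A_j$ already satisfy $d(f^{m_jk}(z_{jr}),f^{m_jk}(z_{jq}))>a>\frac{a}{2}$. So the only constraints still to be enforced are those between points of different groups $j\neq l$.

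The selection then proceeds by induction on $i$.
\begin{enumerate}
\item \textbf{Stage $1$.} Take any $t_1$ elements $z_{11},\dots,z_{1t_1}$ of $A_1$; this is possible since $p_1=t_1+2\geq t_1$.
\item \textbf{Stage $i$.} Suppose that for all $j<i$ we have already chosen $z_{j1},\dots,z_{jt_j}\in A_j$ satisfying the required inequality among themselves. This gives $T_{i-1}=t_1+\cdots+t_{i-1}$ points $w$ of the form $f^{m_jk}(z_{jr})$. By the exclusion principle, each such $w$ rules out at most one element of $A_i$. Hence at least $p_i-T_{i-1}=t_i+2\geq t_i$ elements $x\in A_i$ satisfy $d(f^{m_ik}(x),f^{m_jk}(z_{jr}))>\frac{a}{2}$ for every previously chosen $z_{jr}$. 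Choose $t_i$ of them as $z_{i1},\dots,z_{it_i}$.
\end{enumerate}
After stage $n$, every pair of chosen points from distinct groups is at distance greater than $\frac{a}{2}$ by construction, and every pair within a group is at distance greater than $a$. Together these give the claimed inequality for all pairs $jr\neq lq$.

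I do not expect a genuine obstacle. The only point needing care is the counting at stage $i$: one must check that $p_i$ minus the number of previously selected points is still at least $t_i$, which the definition $p_i=t_1+\cdots+t_i+2$ guarantees with room to spare. One should also state explicitly that the exclusion principle uses distinctness of the elements of $A_i$, so that "at most one" is a count of distinct points.
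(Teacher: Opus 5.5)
Your proof is correct and follows the paper's approach: the same stage-by-stage greedy selection, based on the same triangle-inequality observation that each previously chosen image point rules out at most one element of $A_i$. Your explicit count $p_i-(t_1+\cdots+t_{i-1})=t_i+2\geq t_i$ makes precise the step that the paper leaves implicit in its ``which implies'' sentences.
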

\begin{proof}
The proof is similar to Lemma \ref{yinli1}.

Step 1. Since
$$\min_{1\leq j<l\leq p_1}d(f^{m_1k}(x_{1j}),f^{m_1k}(x_{1l}))> a,$$
there exist $t_1$ points $\{z_{11},z_{12},...,z_{1t_1}\}$ of $A_1$ such that
\begin{equation}\label{lem6}
\begin{aligned}
\min_{1\leq j<l\leq t_1}d(f^{m_1k}(z_{1j}),f^{m_1k}(z_{1l}))>  \frac{a}{2}.
\end{aligned}
\end{equation}

Step 2. Since
\begin{equation}\label{lem8}
\begin{aligned}
\min_{1\leq j<l\leq p_2}d(f^{m_2k}(x_{2j}),f^{m_2k}(x_{2l}))> a,
\end{aligned}
\end{equation}
there exist at least $p_2-1$ points $\{w_{21}^{i},w_{22}^{i},...,w_{2(p_2-1)}^{i}\}$ of $A_2$ such that  $$d(f^{m_2k}(w_{2j}^{i}),f^{m_1k}(z_{1i}))> \frac{a}{2}$$
for every $j=1,2,...p_2-1$ and every $i=1,2,...,t_1$, which implies that there exist $t_2$ points $\{z_{21},z_{22},...,z_{2t_2}\}$ of $A_2$ such that
\begin{equation}\label{lem7}
\begin{aligned}
\min_{1\leq l\leq t_1,1\leq j\leq t_2}d(f^{m_2k}(z_{2j}),f^{m_1k}(z_{1l}))>  \frac{a}{2}.
\end{aligned}
\end{equation}
By (\ref{lem6}), (\ref{lem7})and (\ref{lem8}), we have
$$\min_{jr\neq lq}d(f^{m_jk}(z_{jr}),f^{m_lk}(z_{lq}))> \frac{a}{2},$$
where $1\leq j,l\leq 2, 1\leq r\leq t_j, 1\leq q\leq t_l$.

Continuing like this, after $n-2$ steps we have that there exist  $z_{i1},...,z_{it_i}\in A_i$ for each $i\in\{1,2,...,n-1\}$ such that
\begin{equation}\label{lem9}
\begin{aligned}
\min_{jr\neq lq}d(f^{m_jk}(z_{jr}),f^{m_lk}(z_{lq}))> \frac{a}{2}
\end{aligned}
\end{equation}
where $1\leq j,l\leq n-1, 1\leq r\leq t_j, 1\leq q\leq t_l$.

Step $n-1$. Since
\begin{equation}\label{lem11}
\begin{aligned}
\min_{1\leq j<l\leq p_n}d(f^{m_nk}(x_{nj}),f^{m_nk}(x_{nl}))> a,
\end{aligned}
\end{equation}
there exist at least $p_n-1$ points $\{w_{n1}^{lr},w_{n2}^{lr},...,z_{n(t_n-1)}^{lr}\}$ of $A_n$ such that  $$d(f^{m_nk}(w_{nj}^{lr}),f^{m_lk}(z_{lr}))> \frac{a}{2}$$
for every $j=1,2,...,p_n-1$ and every $l=1,2,...,n-1,r=1,2,...,t_l$, which implies that there exist $z_{n1},...,z_{nt_n}\in A_n$ such that
\begin{equation}\label{lem10}
\begin{aligned}
\min_{1\leq j\leq p_n-1,1\leq l\leq n-1,1\leq r\leq t_l}d(f^{m_nk}(z_{nj}),f^{m_lk}(z_{lr}))> \frac{a}{2}.
\end{aligned}
\end{equation}
 By (\ref{lem9}), (\ref{lem11}) and (\ref{lem10}), we have that $$\min_{jr\neq lq}d(f^{m_jk}(z_{jr}),f^{m_lk}(z_{lq}))> \frac{a}{2}$$
where $1\leq j,l\leq n, 1\leq r\leq t_j, 1\leq q\leq t_l$.

\end{proof}

\section{Main results}

\begin{theorem}\label{th1}
Let $X$ be locally connected, $n\in\mathbb{N}$ with $n\geq2$ and $\beta=(m_1,m_2,...,m_n)\in \mathbb{N}^n$. If $(X,f)$ is multi-sensitive with respect to $\beta$, then it is $\beta$-$n$-sensitive.  Converse is not true in general.

\end{theorem}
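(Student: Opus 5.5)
The plan is to take $U_1=\cdots=U_n=U$ in the definition of multi-sensitivity with respect to $\beta$, which gives a single time $k$ at which every $f^{m_ik}$ separates two points of $U$. Local connectedness then upgrades "two separated points for each $i$" to the $n$ mutually separated points required by $\beta$-$n$-sensitivity. Here $N_{f^{m}}(U,\delta)$ is read as the usual sensitivity set $\{k\in\mathbb{N} : \exists x,y\in U,\ d(f^{mk}(x),f^{mk}(y))>\delta\}$.

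Let $\delta>0$ be the constant of multi-sensitivity with respect to $\beta$, and let $U$ be a nonempty open set. Since $X$ is locally connected, $U$ contains a nonempty connected open set $V$. Applying the definition to $U_1=\cdots=U_n=V$, we get $k\in\bigcap_{1\le i\le n}N_{f^{m_i}}(V,\delta)$. For each $i$ the set $C_i=f^{m_ik}(V)$ is connected, being a continuous image of a connected set, and $\operatorname{diam} C_i>\delta$.

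The key geometric step is the following claim: a connected set $C$ with $\operatorname{diam} C>\delta$ cannot be covered by $n-1$ open balls of radius $r=\delta/(2n)$. To prove it, suppose $C\subseteq\bigcup_{l\le n-1}B(w_l,r)$ and discard the balls that miss $C$. Form the intersection graph in which two balls are adjacent when their traces on $C$ overlap. If this graph were disconnected, its components would split $C$ into two disjoint nonempty relatively open sets, contradicting connectedness. So any two points of $C$ are joined through a chain of at most $n-1$ balls, and $\operatorname{diam} C\le 2(n-1)r<\delta$, a contradiction.

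With the claim in hand, we choose points greedily. Pick any $y_1\in V$. Having chosen $y_1,\dots,y_{i-1}$, the set $C_i$ is not covered by the balls $B(f^{m_lk}(y_l),r)$ for $l<i$, so there is $y_i\in V$ with $d(f^{m_ik}(y_i),f^{m_lk}(y_l))\ge r$ for all $l<i$. Then
$$\min_{1\le j<l\le n} d\bigl(f^{m_jk}(y_j),f^{m_lk}(y_l)\bigr)\ge \frac{\delta}{2n}>\frac{\delta}{4n},$$
so $k\in N_f(U,\beta,\delta/(4n))$, and $f$ is $\beta$-$n$-sensitive with constant $\delta/(4n)$. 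I expect the main obstacle to be the covering/chain claim, specifically making the connectedness argument rigorous. Lemma \ref{yinli1} is not directly usable here, because multi-sensitivity supplies only two separated points for each $i$, not $n$.

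For the failure of the converse I would use a trivial example: $X=\{a,b\}$ with the discrete metric, which is locally connected, $f$ the swap, and $\beta=(1,2)$. For the open set $U=\{a\}$ and odd $k$, we have $d(f^k(a),f^{2k}(a))=d(b,a)=1$, so the system is $\beta$-$2$-sensitive. However, each $N_{f^{m}}(\{a\},\delta)$ is empty, since $U$ contains only one point, so the system is not multi-sensitive with respect to $\beta$.
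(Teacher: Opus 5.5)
Your proof is correct. It shares the paper's skeleton: pass to a connected open $V\subseteq U$, get one time $k$ from multi-sensitivity with $U_1=\cdots=U_n=V$, then pick one point per exponent using connectedness of $f^{m_ik}(V)$. The geometric step, however, is done differently.

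\textbf{Forward direction.} The paper applies the intermediate value theorem to $z\mapsto d(f^{m_ik}(x_i),z)$ on $f^{m_ik}(V)$. This gives $n$ points with pairwise gaps at least $\delta_i/(n-1)$, and only then does it invoke Lemma~\ref{yinli1}. So your remark that Lemma~\ref{yinli1} is unusable holds only before this connectedness upgrade. You instead prove that a connected set of diameter $>\delta$ cannot be covered by $n-1$ balls of radius $\delta/(2n)$, and your intersection-graph argument for this is rigorous. You then choose points greedily in the continuum.

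The paper's route is more elementary. It also produces finite separated families that feed Lemmas~\ref{yinli1} and \ref{yinli2}, which the paper reuses in Theorems~\ref{th2} and \ref{th3}. Your route is self-contained, bypasses Lemma~\ref{yinli1}, and adapts just as easily to choosing several points per exponent by using more, smaller balls.

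\textbf{Converse.} The paper gives an existence argument via an unspecified sensitive but not multi-sensitive system on a locally connected space, with $\beta=(1,\dots,1)$. Its step to a single failing $k$ needs care, since that $k$ may depend on $\delta$. Your two-point swap with $\beta=(1,2)$ is explicit and correct. It exploits that $\beta$-$n$-sensitivity allows $x_1=x_2$ when the exponents differ (as in Example~\ref{ex1}), while $N_f(\{a\},\delta)=\emptyset$. The cost is that it relies on isolated points, which is legitimate since discrete spaces are locally connected.
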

\begin{proof}
Let $\beta=(m_1,m_2,...,m_n)\in \mathbb{N}^n$. For any nonempty open set $U\subset X$, since $X$ is locally connected, there exists a connected nonempty open subset $V\subset U$. Since $(X,f)$ is multi-sensitive with respect to $\beta$ with the sensitive constant $\delta$, $N_{f^{m_1}}(V,\delta)\cap N_{f^{m_2}}(V,\delta)\cap\cdot\cdot\cdot\cap N_{f^{m_n}}(V,\delta)\neq\emptyset$. Therefore, there are $k\in\mathbb{N}$ and $x_1,x_2,...,x_n,y_1,y_2,...,y_n\in V$ such that $d(f^{m_ik}(x_i),f^{m_ik}(y_i))>\delta$ for each $i=1,2,...,n$. Denote $\delta_i=d(f^{m_ik}(x_i),f^{m_ik}(y_i))$ for each $i=1,2,...,n$ and $\delta_0=\min_{0\leq i\leq n}\{\delta_i\}$ . Note that $\delta_0>\delta$. Let $g_i:f^{m_ik}(V)\rightarrow R$ with $g_i(z)=d(f^{m_ik}(x_i),z)$, where $i=1,2,...,n$. As $g_i$ is continuous and $f^{m_ik}(V)$ is connected, $g_i(f^{m_ik}(x_i))=0$ and $g_i(f^{m_ik}(y_i))=\delta_i$, we have $f^{m_ik}(V)\supset[0,\delta_i]$. So there are $n$ distinct points $x_{i1}'=f^{m_ik}(x_i),x_{i2}',...,x_{in}'=f^{m_ik}(y_i)\in f^{m_ik}(V)$ such that $d(f^{m_ik}(x_{i1}'),f^{m_ik}(x_{ij}'))=\frac{j-1}{n-1}\delta_i$ for each $j=1,2,...,n$. Take $x_{i1}=x_i,x_{i2},...,x_{in}=y_i\in V$ with $f^{m_ik}(x_{ij})=x_{ij}'$ for each $j=1,2,...,n$. Then we have $$\min_{1\leq j<l\leq n}d(f^{m_ik}(x_{ij}),f^{m_ik}(x_{il}))\geq \frac{\delta_i}{n-1}.$$
Thus, for each $i=1,2,...,n$, we have
$$\min_{1\leq j<l\leq n}d(f^{m_ik}(x_{ij}),f^{m_ik}(x_{il}))> \frac{\delta}{n}.$$
By Lemma \ref{yinli1}, there exist $x_i'\in\{x_{i1},x_{i2},...,x_{in}\}$ for each $i=1,2,...,n$ such that
$$\min_{1\leq j<l\leq n}d(f^{m_jk}(x_j'),f^{m_lk}(x_l')> \frac{\delta}{2n},$$
which shows that $(X,f)$ is $\beta$-$n$-sensitive.

Conversely. Then we know that there is a dynamical system $(X,f)$ such that it is sensitive but not multi-sensitive, besides, $X$ is locally connected. Since the dynamical system $(X,f)$ is not multi-sensitive, there exists $k\in \mathbb{N}$ such that $(X,f)$ is not multi-sensitive with $\beta$, where $\beta=(1,1,...,1)\in \mathbb{N}^k$. On the other hand, since $X$ is locally connected and $(X,f)$  is sensitive, then it is $k$-sensitive \cite{a1}, further it is $\beta$-$k$-sensitive.
\end{proof}

\begin{theorem}\label{th2}
Let $X$ be locally connected. If $(X,f)$ is $\mathcal{N}$ sensitive, then it is strongly $\beta$-$n$-sensitive.

\end{theorem}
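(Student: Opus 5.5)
The plan is to reduce the statement to Theorem \ref{th1}. Strong $\beta$-$n$-sensitivity asks for $\beta$-$n$-sensitivity for every $n\geq 2$ and every $\beta=(m_1,\dots,m_n)\in\mathbb{N}^n$. By Theorem \ref{th1}, since $X$ is locally connected, it is enough to show that $(X,f)$ is multi-sensitive with respect to each such $\beta$. The hypothesis only gives multi-sensitivity with respect to the special vectors $(1,2,\dots,N)$, so the main work is to pass from these vectors to an arbitrary $\beta$.

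\textbf{Passing to an arbitrary $\beta$.} Fix $\beta=(m_1,\dots,m_n)$ and nonempty open sets $U_1,\dots,U_n$. Let $\delta$ be the $\mathcal{N}$-sensitivity constant. Put $N=\max_i m_i$ and apply the hypothesis with the vector $(1,2,\dots,N)$ and the following $N$ open sets: put $V_{m_i}=U_i$, and take $V_j=X$ for the remaining indices $j$.

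\textbf{Possible collision of entries.} If the $m_i$ are pairwise distinct, the positions $m_i$ are distinct, so this assignment is legitimate. Then some $k$ lies in $\cap_{j}N_{f^{j}}(V_j,\delta)$, and in particular $k\in N_{f^{m_i}}(U_i,\delta)$ for every $i$.

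If several $m_i$ coincide, say $m_i=m_{i'}=m$, a single slot $j=m$ would have to carry two different open sets, and $(1,2,\dots,N)$ has only one such slot. This is the main obstacle.

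\textbf{Resolving the collision.} I would first try replacing $(1,\dots,N)$ by a longer vector $(1,2,\dots,L)$. The idea is to encode repeated entries through multiples: use slots $m$, $2m$, $3m,\dots$ to carry the different $U_i$ sharing the value $m$. This requires a way to convert a separation for $f^{jm}$ at time $k$ into a separation for $f^{m}$ at time $jk$. The identity $f^{jm\cdot k}=f^{m\cdot (jk)}$ gives exactly this, but only at time $jk$ rather than a common $k$, so the times no longer match across the different $U_i$.

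If that fails, the fallback is to observe that multi-sensitivity with respect to $\beta$ with repeated entries follows from multi-sensitivity with respect to a vector that distinguishes these entries, using continuity of $f$ and a pigeonhole argument on the finitely many sets. Failing that, I would accept the reading in which the definition of $\mathcal{N}$ sensitivity is applied to arbitrary open sets placed at arbitrary indices, which is what the paper's own convention implicitly uses. Once multi-sensitivity with respect to every $\beta$ is in hand, Theorem \ref{th1} finishes the proof.
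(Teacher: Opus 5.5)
\textbf{There is a genuine gap: the case where $\beta$ has repeated entries is never proved.}

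You use Theorem~\ref{th1} as a black box. Its hypothesis, multi-sensitivity with respect to $\beta$, requires a common $k\in\cap_i N_{f^{m_i}}(U_i,\delta)$ for \emph{arbitrary} open sets $U_1,\dots,U_n$. Your treatment of injective $\beta$ (the sets $U_i$ in slots $m_i$, and $X$ elsewhere) is fine. But when $m_i=m_{i'}=m$, you need one time at which two \emph{different} open sets are both $\delta$-separated under $f^{m}$, and $\mathcal{N}$ sensitivity does not give this. It has only one slot per exponent, and, as you note yourself, the multiples trick desynchronises the times. Your second fallback (continuity plus pigeonhole) is not an argument: nothing in continuity moves a separation of two points of $U_i$ from the time the hypothesis provides to a different, common time. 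Your third fallback simply assumes the missing property. It is also not what the paper does: the paper never puts two different sets at one index. So strong $\beta$-$n$-sensitivity is left unproved for every non-injective $\beta$, e.g.\ $(1,1,\dots,1)$.

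\textbf{The missing observation: you never need the multi-open-set statement.} $\beta$-$n$-sensitivity concerns a single open set $U$, and the proof of Theorem~\ref{th1} uses its hypothesis only for the tuple $(V,\dots,V)$, with $V\subset U$ connected. The fix runs as follows.
\begin{enumerate}
\item Apply $\mathcal{N}$ sensitivity with the vector $(1,2,\dots,\max_i m_i)$ and $V$ in every slot. This gives one $k$ lying in $N_{f^{j}}(V,\delta)$ for all $j\le \max_i m_i$. Hence, for each value $m$ occurring in $\beta$, there is a pair $x,y\in V$ with $d(f^{mk}(x),f^{mk}(y))>\delta$. Repeated values just reuse the same pair, so no collision ever arises.
\item The connectedness argument from the proof of Theorem~\ref{th1} then gives, for each $i$, $n$ points of $V$ whose $f^{m_ik}$-images are pairwise more than $\delta/n$ apart.
\item Lemma~\ref{yinli1} selects one point per coordinate with all cross distances $>\delta/(2n)$. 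Its pigeonhole argument does not require the $m_i$ to be distinct.
\end{enumerate}

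This is essentially the paper's proof. The paper sorts $\beta$, groups equal entries into blocks, and uses Lemma~\ref{yinli2} to extract $k_i$ points at each distinct level, rather than reusing pairs with Lemma~\ref{yinli1}. The key step is the same in both: feed the same connected $V$ into every slot of $(1,\dots,N)$. That is the step your plan is missing.
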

\begin{proof}
Let $n\in \mathbb{N}$ with $n\geq2$ and $\beta=(m_1,m_2,...,m_n)\in \mathbb{N}^n$. Denote $\alpha=(\alpha_1,\alpha_2,...,\alpha_n)$ is replace of $\beta$ with $\alpha_i\leq\alpha_j$ if $i<j$.
Let
$$\alpha=(\underbrace{\alpha_{t_1},\alpha_{t_1},...,\alpha_{t_1}}_{k_1-fold},\underbrace{\alpha_{t_2},\alpha_{t_2},...,\alpha_{t_2}}_{k_2-fold},...,\underbrace{\alpha_{t_s},\alpha_{t_s},...,\alpha_{t_s}}_{k_s-fold}),$$
where $1=t_1<t_2<...<t_s\leq n$ and $k_1+k_2+...+k_s=n$.
For any nonempty open set $U\subset X$, since $X$ is locally connected, there exists a connected nonempty open subset $V\subset U$. Since $(X,f)$ is $\mathcal{N}$ sensitive with the sensitive constant $\delta$, $N_{f}(V,\delta)\cap N_{f^2}(V,\delta)\cap...\cap N_{f^{\alpha_{t_s}}}(V,\delta)\neq\emptyset$. Therefore, there are $k\in\mathbb{N}$ and $x_1,x_2,...,x_s,y_1,y_2,...,y_s\in V$ such that $d(f^{\alpha_{t_i}k}(x_i),f^{\alpha_{t_i}k}(y_i))>\delta$ for each $i=1,2,...,s$. Denote $\delta_i=d(f^{\alpha_{t_i}k}(x_i),f^{\alpha_{t_i}k}(y_i))$ for each $i=1,2,...,s$ and $\delta_0=\min_{0\leq i\leq n}\{\delta_i\}$ . Note that $\delta_0>\delta$. Let $g_i:f^{\alpha_{t_i}k}(V)\rightarrow R$ with $g_i(z)=d(f^{\alpha_{t_i}k}(x_i),z)$, where $i=1,2,...,s$. As $g_i$ is continuous and $f^{\alpha_{t_i}k}(V)$ is connected, $g_i(f^{\alpha_{t_i}k}(x_i))=0$ and $g_i(f^{\alpha_{t_i}k}(y_i))=\delta_i$, we have $f^{\alpha_{t_i}k}(V)\supset[0,\delta_i]$. So there are $k_1+k_2+...+k_i+2$ distinct points $x_{i1}'=f^{\alpha_{t_i}k}(x_i),x_{i2}',...,x_{i[k_1+k_2+...+k_i+2]}'=f^{\alpha_{t_i}k}(y_i)\in f^{\alpha_{t_i}k}(V)$ such that $d(f^{\alpha_{t_i}k}(x_{i1}'),f^{\alpha_{t_i}k}(x_{ij}'))=\frac{j-1}{k_1+k_2+...+k_i+1}\delta_i$ for each $j=1,2,...,k_1+k_2+...+k_i+2$. Take $x_{i1}=x_i,x_{i2},...,x_{i[k_1+k_2+...+k_i+2]}=y_i\in V$ with $f^{\alpha_{t_i}k}(x_{ij})=x_{ij}'$ for each $j=1,2,...,k_1+k_2+...+k_i+2$. Then we have $$\min_{1\leq j<l\leq n}d(f^{\alpha_{t_i}k}(x_{ij}),f^{\alpha_{t_i}k}(x_{il}))\geq \frac{\delta_i}{k_1+k_2+...+k_i+1}> \frac{\delta_i}{n+2}.$$
Thus, for each $i=1,2,...,s$, we have
$$\min_{1\leq j<l\leq n}d(f^{\alpha_{t_i}k}(x_{ij}),f^{\alpha_{t_i}k}(x_{il}))> \frac{\delta}{n+2}.$$
By Lemma \ref{yinli2}, there exist $x_i^1,x_i^2,...,x_i^{k_i}\in\{x_{i1},x_{i2},...,x_{i[k_1+k_2+...+k_i+2]}\}$ for each $i=1,2,...,s$ such that
$$\min_{x_j^r\neq x_l^q}d(f^{m_jk}(x_j^r),f^{m_lk}(x_l^q))> \frac{\delta}{2n+4}$$
where $1\leq j,l\leq s, 1\leq r\leq t_j, 1\leq q\leq t_l$, which shows that $(X,f)$ is $\alpha$-$n$-sensitive. By definition of $\beta$-$n$-sensitive, $(X,f)$ is also $\beta$-$n$-sensitive. For the arbitrariness of $n,\beta$, it follows that $(X,f)$ is strongly $\beta$-$n$-sensitive.

\end{proof}

\begin{theorem}\label{th3}
Let $X$ be locally connected, $n,l\in\mathbb{N}$ with $n,l\geq2$, $\beta=(m_1,m_2,...,m_n)\in \mathbb{N}^n$ and $\alpha=(c_1,c_2,...,c_l)\in \mathbb{N}^l$. Denote $B=\{m_1,m_2,...,m_n\}$ and $K=\{k_1,k_2,...,k_l\}$. Suppose that $B\subset K$. If $(X,f)$ is multi-sensitive with respect to $\alpha$, then $(X,f)$ is $\beta$-$n$-sensitive.
\end{theorem}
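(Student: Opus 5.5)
Throughout I read $K$ as the set of entries $\{c_1,c_2,\dots,c_l\}$ of $\alpha$, so the hypothesis $B\subset K$ says that every exponent $m_i$ occurring in $\beta$ equals some $c_j$. The proof follows the pattern of Theorem \ref{th1} and Theorem \ref{th2}: first extract from multi-sensitivity with respect to $\alpha$ a single time $k$ at which every exponent in $B$ spreads a connected open set by more than $\delta$; then use local connectedness to produce enough well separated points for each exponent; finally combine them with Lemma \ref{yinli2}.

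\emph{Step 1 (a common time).} Fix a nonempty open $U$ and, by local connectedness, a connected nonempty open $V\subset U$. Apply multi-sensitivity with respect to $\alpha$, with constant $\delta$, to the choice $U_1=\dots=U_l=V$. This gives $k\in\mathbb{N}$ and points $x_j,y_j\in V$ with $d(f^{c_jk}(x_j),f^{c_jk}(y_j))>\delta$ for every $j=1,\dots,l$. In particular, for each distinct value $b\in B$ we fix an index $j$ with $c_j=b$ and obtain a pair in $V$ separated by more than $\delta$ under $f^{bk}$.

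\emph{Step 2 (many points per value).} List the distinct values of $B$ as $b_1<\dots<b_s$, and let $k_i$ be the number of coordinates of $\beta$ equal to $b_i$, so that $k_1+\dots+k_s=n$. Set $p_i=k_1+\dots+k_i+2\le n+2$. The set $f^{b_ik}(V)$ is connected. Hence, exactly as in Theorem \ref{th2}, the function $z\mapsto d(f^{b_ik}(x),z)$ takes every value in $[0,\delta_i]$ on it, where $\delta_i>\delta$. Choosing preimages in $V$ of points at distances $\frac{j-1}{p_i-1}\delta_i$, we get $p_i$ points of $V$ whose $f^{b_ik}$-images are pairwise more than $\delta/(n+2)$ apart.

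\emph{Step 3 (combining).} Apply Lemma \ref{yinli2} with $t_i=k_i$ and $a=\delta/(n+2)$. It yields $k_i$ points $z_{i1},\dots,z_{ik_i}$ for each value $b_i$, and the images $f^{b_ik}(z_{ir})$, taken over all $i$ and $r$, are pairwise more than $\delta/(2n+4)$ apart. Assigning these $n$ points to the coordinates of $\beta$ (value $b_i$ to each of its $k_i$ positions) gives $x'_1,\dots,x'_n\in U$ with
$$\min_{1\le j<l\le n}d\big(f^{m_jk}(x'_j),f^{m_lk}(x'_l)\big)>\frac{\delta}{2n+4}.$$
Thus $k\in N_f(U,\beta,\delta/(2n+4))$, and since $U$ was arbitrary, $(X,f)$ is $\beta$-$n$-sensitive.

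\emph{Main obstacle.} The delicate point is Step 3 when $\beta$ has repeated entries. Lemma \ref{yinli1} only handles one point per coordinate, so equal exponents must be treated through Lemma \ref{yinli2}. To use it we need the larger counts $p_i=t_1+\dots+t_i+2$ of separated points, and these are exactly what local connectedness supplies in Step 2. One must also check that the greedy selection in Lemma \ref{yinli2} works when the indices are ordered by distinct values. If the lemma's bookkeeping fails there, I would rerun its counting argument directly: each previously chosen image excludes at most one of the new candidates, because candidates are more than $a$ apart. This only requires $p_i\ge k_1+\dots+k_i$ plus slack, which the choice of $p_i$ provides.
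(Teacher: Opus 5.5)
Your proposal is correct and follows essentially the same route as the paper. Like the paper, you use $B\subset K$ to get one time $k$ from multi-sensitivity with respect to $\alpha$, use connectedness of $f^{b_ik}(V)$ to produce $k_1+\dots+k_i+2$ points whose images are more than $\delta/(n+2)$ apart, and combine these via Lemma~\ref{yinli2} to reach the constant $\delta/(2n+4)$; the only difference is that you apply the hypothesis directly with $U_1=\dots=U_l=V$ rather than first passing to the vector $\alpha_1$ of distinct entries of $\alpha$, which is a harmless simplification.
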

\begin{proof}
Denote $\beta'=(b_1,b_2,...,b_n)$ is replace of $\beta$ with $b_i\leq b_j$ if $i<j$ and $\alpha'=(a_1,a_2,...,a_l)$ is replace of $\alpha$ with $a_i\leq a_j$ if $i<j$.

Let
$$\beta_1=(\underbrace{b_{t_1},b_{t_1},...,b_{t_1}}_{k_1-fold},\underbrace{b_{t_2},b_{t_2},...,b_{t_2}}_{k_2-fold},...,\underbrace{b_{t_s},b_{t_s},...,b_{t_s}}_{k_s-fold}),$$
where $1=t_1<t_2<...<t_s\leq n$ and $k_1+k_2+...+k_s=n$.
Let
$$\alpha_1=(a_{t_1},a_{t_2},...,a_{t_r})$$
where  $t_r= \sharp K$ and $a_{t_i}<a_{t_j}$ with $i<j$, ($\sharp A$ denotes the cardinality of the set $A$).
Since $B\subset K$, for each $b_{t_i}\in B$, there exists $a_{t_j}\in K$ such that $b_{t_i}=a_{t_j}$, where $i\in\{1,2,...,s\}$ and  $j\in\{1,2,...,r\}$.
For any nonempty open set $U\subset X$, since $X$ is locally connected, there exists a connected nonempty open subset $V\subset U$. Since $(X,f)$  is multi-sensitive with respect to $\alpha$ with the sensitive constant $\delta$, $(X,f)$ is also multi-sensitive with respect to $\alpha_1$ with the sensitive constant $\delta$, then $N_{f^{a_{t_1}}}(V,\delta)\cap N_{f^{a_{t_2}}}(V,\delta)\cap\cdot\cdot\cdot\cap N_{f^{a_{t_r}}}(V,\delta)\neq\emptyset$. Therefore, there are $k\in\mathbb{N}$ and $x_1,x_2,...,x_s,y_1,y_2,...,y_s\in V$ such that $d(f^{b_{t_i}k}(x_i),f^{b_{t_i}k}(y_i))>\delta$ for each $i=1,2,...,s$. Denote $\delta_i=d(f^{b_{t_i}k}(x_i),f^{b_{t_i}k}(y_i))$ and $\delta_0=min\{d_i\}$ for each $i=1,2,...,s$. Note that $\delta_0>\delta$. Let $g_i:f^{b_{t_i}k}(V)\rightarrow R$ with $g_i(z)=d(f^{b_{t_i}k}(x_i),z)$, where $i=1,2,...,s$. As $g_i$ is continuous and $f^{b_{t_i}k}(V)$ is connected, $g_i(f^{b_{t_i}k}(x_i))=0$ and $g_i(f^{b_{t_i}k}(y_i))=\delta_i$, we have $f^{b_{t_i}k}(V)\supset[0,b_i]$. So there are $k_1+k_2+...+k_i+2$ distinct points $x_{i1}'=f^{b_{t_i}k}(x_i),x_{i2}',...,x_{i[k_1+k_2+...+k_i+2]}'=f^{b_{t_i}k}(y_i)\in f^{\alpha_{t_i}k}(V)$ such that $d(f^{b_{t_i}k}(x_{i1}'),f^{b_{t_i}k}(x_{ij}'))=\frac{j-1}{k_1+k_2+...+k_i+1}\delta_i$ for each $j=1,2,...,k_1+k_2+...+k_i+2$. Take $x_{i1}=x_i,x_{i2},...,x_{i[k_1+k_2+...+k_i+2]}=y_i\in V$ with $f^{b_{t_i}k}(x_{ij})=x_{ij}'$ for each $j=1,2,...,k_1+k_2+...+k_i+2$. Then we have $$\min_{1\leq j<l\leq n}d(f^{b_{t_i}k}(x_{ij}),f^{b_{t_i}k}(x_{il}))\geq \frac{\delta_i}{k_1+k_2+...+k_i+1}> \frac{\delta_i}{n+2}.$$
Thus, for each $i=1,2,...,s$, we have
$$\min_{1\leq j<l\leq n}d(f^{b_{t_i}k}(x_{ij}),f^{b_{t_i}k}(x_{il}))> \frac{\delta}{n+2}.$$
By Lemma \ref{yinli2}, there exist $x_i^1,x_i^2,...,x_i^{k_i}\in\{x_{i1},x_{i2},...,x_{i[k_1+k_2+...+k_i+2]}\}$ for each $i=1,2,...,s$ such that
$$\min_{x_j^r\neq x_l^q}d(f^{m_jk}(x_j^r),f^{m_lk}(x_l^q))> \frac{\delta}{2n+4}$$
where $1\leq j,l\leq s, 1\leq r\leq t_j, 1\leq q\leq t_l$, which shows that $(X,f)$ is $\beta-n-$sensitive.
\end{proof}

\begin{theorem}
Let $X$ be locally connected, $n,l\in\mathbb{N}$ with $n,l\geq2$, $\beta=(m_1,m_2,...,m_n)\in \mathbb{N}^n$ and $\alpha=(c_1,c_2,...,c_l)\in \mathbb{N}^l$. Denote $B=\{m_1,m_2,...,m_n\}$ and $K=\{k_1,k_2,...,k_l\}=\{p_1,p_2,...,p_r\}$, where $p_i\neq p_j$ with $i\neq j$. Let $\gamma=(p_1,p_2,...,p_r,p_1,p_2,...,p_r)$. Suppose that $B\subset K$. If $(X,f)$ is $\gamma$-$2r$-sensitive, then $(X,f)$ is $\beta$-$n$-sensitive.
\end{theorem}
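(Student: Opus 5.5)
The plan is to show that $\gamma$-$2r$-sensitivity already contains the multi-sensitivity-type information that the proof of Theorem \ref{th3} used. Then I will rerun the connectedness-plus-Lemma \ref{yinli2} argument from that proof. Let $\delta>0$ be a $\gamma$-$2r$-sensitivity constant.

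Fix a nonempty open $U\subset X$. By local connectedness, pick a connected nonempty open $V\subset U$. Applying $\gamma$-$2r$-sensitivity to $V$ gives $k\in\mathbb{N}$ and points $x_1,\dots,x_r,y_1,\dots,y_r\in V$ whose $2r$ images are pairwise more than $\delta$ apart. The images are
$$f^{p_1k}(x_1),\dots,f^{p_rk}(x_r),\ f^{p_1k}(y_1),\dots,f^{p_rk}(y_r).$$
I only keep the "diagonal" consequence
$$\delta_i:=d(f^{p_ik}(x_i),f^{p_ik}(y_i))>\delta,\qquad i=1,\dots,r,$$
which holds because the $i$-th and $(r+i)$-th coordinates of $\gamma$ are both $p_i$. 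This is exactly what multi-sensitivity with respect to $(p_1,\dots,p_r)$ would give on the single set $V$, for a common time $k$.

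Next, group $\beta$ by values as in Theorem \ref{th3}. The distinct values of $B$ are some $p_{j_1},\dots,p_{j_s}$, and $p_{j_i}$ occurs $k_i$ times in $\beta$, so that $k_1+\dots+k_s=n$. Since $B\subset K$, each such value indeed appears among the $p_j$. For each $i$, the set $f^{p_{j_i}k}(V)$ is connected. The continuous map $z\mapsto d(f^{p_{j_i}k}(x_{j_i}),z)$ on this set takes the values $0$ and $\delta_{j_i}$, so it attains every value in $[0,\delta_{j_i}]$. Put $q_i=k_1+\dots+k_i+2$. I choose points $x_{i1},\dots,x_{iq_i}\in V$ whose images under $f^{p_{j_i}k}$ lie at distances $\frac{l-1}{q_i-1}\delta_{j_i}$, $l=1,\dots,q_i$, from $f^{p_{j_i}k}(x_{j_i})$. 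By the reverse triangle inequality $d(a,b)\ge |d(a,c)-d(b,c)|$, these images are pairwise more than $\delta_{j_i}/(n+1)>\delta/(n+2)$ apart.

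Lemma \ref{yinli2} is then applied with $t_i=k_i$ and $a=\delta/(n+2)$. It yields $k_i$ points in each group such that all $n$ selected points are pairwise more than $\frac{\delta}{2(n+2)}$ apart, each measured after its own iterate $f^{p_{j_i}k}$. Assigning these points to the coordinates of $\beta$ with value $p_{j_i}$ shows that $k\in N_f(U,\beta,\frac{\delta}{2n+4})$. Hence $(X,f)$ is $\beta$-$n$-sensitive with constant $\frac{\delta}{2n+4}$, which is independent of $U$.

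The only delicate point is the pairwise separation in the connectedness step: the points must be spaced along distances from a \emph{common} base point, so that the reverse triangle inequality gives separation and not merely distinctness. The bookkeeping matching the repeated entries of $\beta$ to the groups in Lemma \ref{yinli2} is routine.
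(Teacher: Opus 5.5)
Your proof is correct and takes the same approach as the paper. Like the paper, you extract the diagonal pairs $d(f^{p_ik}(x_i),f^{p_ik}(y_i))>\delta$ from $\gamma$-$2r$-sensitivity, spread $k_1+\dots+k_i+2$ points along the connected image $f^{p_{j_i}k}(V)$, and apply Lemma~\ref{yinli2} with $t_i=k_i$ to obtain the constant $\delta/(2n+4)$; your explicit common base point with the reverse triangle inequality and your $j_i$ indexing just state cleanly what the paper's write-up leaves garbled.
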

\begin{proof}

Denote $\beta'=(b_1,b_2,...,b_n)$ is replace of $\beta$ with $b_i\leq b_j$ if $i<j$.

Let
$$\beta_1=(\underbrace{b_{t_1},b_{t_1},...,b_{t_1}}_{k_1-fold},\underbrace{b_{t_2},b_{t_2},...,b_{t_2}}_{k_2-fold},...,\underbrace{b_{t_s},b_{t_s},...,b_{t_s}}_{k_s-fold}),$$
where $1=t_1<t_2<...<t_s\leq n$ and $k_1+k_2+...+k_s=n$.
Let
$$\alpha_1=(a_1,a_2,...,a_r)$$
where $a_i<a_j$ if $i<j$ and $\{a_1,a_2,...,a_r\}=\{p_1,p_2,...,p_r\}$. Since $B\subset K$, for each $b_{t_i}\in B$, there exists $a_{j}\in K$ such that $b_{t_i}=a_{j}$, where $i\in\{1,2,...,s\}$ and  $j\in\{1,2,...,r\}$.
For any nonempty open set $U\subset X$, since $X$ is locally connected, there exists a connected nonempty open subset $V\subset U$. Denote $\gamma=(p_1,p_2,...,p_r,p_1,p_2,...,p_r)=(d_1,d_2,...,d_r,d_{r+1},d_{r+2},...,d_{2r})$. Since $(X,f)$  is $\gamma$-$2r$-sensitive with the sensitive constant $\delta$,
there exist $k\in\mathbb{N}$ and  $x_1,x_2,...,x_{2r}\in V$ such that $$\min_{1\leq i< j\leq 2r}d(f^{d_ik}(x_i),f^{d_jk}(x_j))>\delta,$$
which implies that $\min_{1\leq i\leq r}d(f^{a_ik}(x_i),f^{a_ik}(x_i))>\delta$. Hence, $d(f^{b_{t_i}k}(x_i),f^{b_{t_i}k}(y_i))\geq\delta$ for each $i=1,2,...,s$. Denote $\delta_i=d(f^{b_{t_i}k}(x_i),f^{b_{t_i}k}(y_i))$ and $\delta_0=min\{d_i\}$ for each $i=1,2,...,s$. Note that $\delta_0>\delta$. Let $g_i:f^{b_{t_i}k}(V)\rightarrow R$ with $g_i(z)=d(f^{b_{t_i}k}(x_i),z)$, where $i=1,2,...,s$. As $g_i$ is continuous and $f^{b_{t_i}k}(V)$ is connected, $g_i(f^{b_{t_i}k}(x_i))=0$ and $g_i(f^{b_{t_i}k}(y_i))=\delta_i$, we have $f^{b_{t_i}k}(V)\supset[0,b_i]$. So there are $k_1+k_2+...+k_i+2$ distinct points $x_{i1}'=f^{b_{t_i}k}(x_i),x_{i2}',...,x_{i[k_1+k_2+...+k_i+2]}'=f^{b_{t_i}k}(y_i)\in f^{\alpha_{t_i}k}(V)$ such that $d(f^{b_{t_i}k}(x_{i1}'),f^{b_{t_i}k}(x_{ij}'))=\frac{j-1}{k_1+k_2+...+k_i+1}\delta_i$ for each $j=1,2,...,k_1+k_2+...+k_i+2$. Take $x_{i1}=x_i,x_{i2},...,x_{i[k_1+k_2+...+k_i+2]}=y_i\in V$ with $f^{b_{t_i}k}(x_{ij})=x_{ij}'$ for each $j=1,2,...,k_1+k_2+...+k_i+2$. Then we have $$\min_{1\leq j<l\leq n}d(f^{b_{t_i}k}(x_{ij}),f^{b_{t_i}k}(x_{il}))\geq \frac{\delta_i}{k_1+k_2+...+k_i+1}> \frac{\delta_i}{n+2}.$$
Thus, for each $i=1,2,...,s$, we have
$$\min_{1\leq j<l\leq n}d(f^{b_{t_i}k}(x_{ij}),f^{b_{t_i}k}(x_{il}))> \frac{\delta}{n+2}.$$
By Lemma \ref{yinli2}, there exist $x_i^1,x_i^2,...,x_i^{k_i}\in\{x_{i1},x_{i2},...,x_{i[k_1+k_2+...+k_i+2]}\}$ for each $i=1,2,...,s$ such that
$$\min_{x_j^r\neq x_l^q}d(f^{m_jk}(x_j^r),f^{m_lk}(x_l^q))> \frac{\delta}{2n+4}$$
where $1\leq j,l\leq s, 1\leq r\leq t_j, 1\leq q\leq t_l$, which shows that $(X,f)$ is $\beta-n-$sensitive.

\end{proof}

\begin{theorem}\label{th5}
 Let $n,l\in\mathbb{N}$ with $n,l\geq2$, $\beta=(m_1,m_2,...,m_n)\in \mathbb{N}^n$ and $\alpha=(k_1,k_2,...,k_l)\in \mathbb{N}^l$. Assume that for each $i=1,2,...,n$, there exists different $j\in\{1,2,...,l\}$ such that $m_i=k_j$. If $(X,f)$ is $\alpha$-$l$-sensitive, then $(X,f)$ is $\beta$-$n$-sensitive.
\end{theorem}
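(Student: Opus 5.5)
The statement is a "sub-selection" result, and I would prove it directly from the definition of $\alpha$-$l$-sensitivity with no local connectedness needed. The hypothesis says that for each $i\in\{1,\dots,n\}$ we can choose an index $j(i)\in\{1,\dots,l\}$ with $m_i=k_{j(i)}$, and that these indices can be taken pairwise distinct. So $i\mapsto j(i)$ is an injection $\{1,\dots,n\}\to\{1,\dots,l\}$; in particular $n\leq l$.

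Let $\delta>0$ be a sensitivity constant for $\alpha$-$l$-sensitivity. I would show that the same $\delta$ works for $\beta$-$n$-sensitivity. Fix a nonempty open set $U\subseteq X$. Since $N_f(U,\alpha,\delta)\neq\emptyset$, there exist $k\in\mathbb{N}$ and points $x_1,\dots,x_l\in U$ with
$$\min_{1\leq p<q\leq l}d(f^{k_pk}(x_p),f^{k_qk}(x_q))>\delta .$$
Now put $y_i=x_{j(i)}\in U$ for $i=1,\dots,n$. For $1\leq i<i'\leq n$ we have $j(i)\neq j(i')$ by injectivity. Hence
$$d(f^{m_ik}(y_i),f^{m_{i'}k}(y_{i'}))=d(f^{k_{j(i)}k}(x_{j(i)}),f^{k_{j(i')}k}(x_{j(i')}))>\delta ,$$
because this is one of the pairwise distances controlled by the displayed minimum (the order of the two indices is irrelevant, by symmetry of $d$). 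Taking the minimum over $i<i'$ gives $k\in N_f(U,\beta,\delta)$, so this set is nonempty. Since $U$ was arbitrary, $(X,f)$ is $\beta$-$n$-sensitive with the same constant $\delta$.

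The only delicate point is reading the hypothesis correctly: the indices $j$ must be distinct for distinct $i$. Without injectivity, two coordinates $m_i=m_{i'}$ could be matched to the same $k_j$. We would then be forced to use the same point twice, and the required distance $d(f^{m_ik}(y),f^{m_ik}(y))=0$ would fail. I would state this explicitly at the start of the proof, since it is exactly what makes the restriction argument work. No appeal to Lemma \ref{yinli1} or Lemma \ref{yinli2} is needed.
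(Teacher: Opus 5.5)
Your proof is correct and is the paper's argument: you take the $l$ witnesses from $\alpha$-$l$-sensitivity and keep the $n$ of them picked out by the injective matching $i\mapsto j(i)$, with the same $\delta$ and the same time. The paper leaves the reindexing implicit (it writes $x_i$ where $x_{j(i)}$ is meant), so your explicit use of injectivity is the more careful version of the same proof.
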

\begin{proof}
Let $U\subseteq X$ be a nonempty set and $\beta=(m_1,m_2,...,m_n)\in \mathbb{N}^n$. Since $(X,f)$ is $\alpha$-$l$-sensitive, there are $\delta>0$, $p\in\mathbb{N}$ and $x_1,x_2,...,x_l\in U$ such that
$$\min_{1\leq i<j\leq l}d(f^{k_ip}(x_i),f^{k_jp}(x_j))>\delta.$$
By the assumption, $$\min_{1\leq i<j\leq n}d(f^{m_ip}(x_i),f^{m_jp}(x_j))>\delta,$$
which shows that $(X,f)$ is $\beta$-$n$-sensitive.

\end{proof}

The following two examples are the application of Theorem \ref{th5}. Besides, them show that if we drop the  condition of Theorem \ref{th5}, then the result does hold in general. Moreover, them show that the $\beta$-$n$-sensitivity is not iteration invariants in general.

\begin{example}\label{ex1}

Let $X=\{1,2,3,4\}$ and $f:X\rightarrow X$ be the map defined by $f(1)=2$,$f(2)=3$,$f(3)=4$,$f(4)=1$.

Take $\delta=\frac{1}{2}$. For $n=3$ and $\beta=(2,3,4)$, it is is easy to verify that for any nonempty set $U\subseteq X$, there exist $x_1=x_2=x_3\in U$ and $k=1$ such that
$$\min_{1\leq i<j\leq 3}d(f^{m_i}(x_i),f^{m_j}(x_j))>\frac{1}{2}.$$
So it is $\beta$-$3$-sensitive.

But for $n=4$ and $\beta_1=(2,3,3,4)$, it is is easy to verify that $(X,f)$ is not $\beta_1$-$4$-sensitive.
In fact, for $U=\{1\}$ and any $m\in\mathbb{N}$, $d(f^{3m}(1),f^{3m}(1))\equiv 0$. This shows that $(2,3,4)$-$3$-sensitivity does not imply $(2,3,3,4)$-$4$-sensitivity.

Besides, for $n=2$ and $\beta_2=(2,4)$, with the similar argument, $(X,f)$ is $\beta_2$-$2$-sensitive, which shows that $(2,3,4)$-$3$-sensitivity implies $(2,4)$-$2$-sensitivity. It is also the result of Theorem \ref{th5}.

However, for $n=2$ and $\beta_3=(4,8)$ and any $\lambda>0$, there is a nonempty set $\{1\}$ such that for any $k\in \mathbb{N}$
$$d(f^{4k}(1),f^{8k}(1))\equiv0,$$
which shows that $(X,f)$ is not $\beta_3$-$2$-sensitive. Further, $(X,f^2)$ is not $\beta_2$-$2$-sensitive. It shows that $(2,4)$-$2$-sensitivity does not imply $(4,8)$-$2$-sensitivity. Besides, it shows that $(X,f)$ is $(2,4)$-$2$-sensitive but $(X,f^2)$ is not $(2,4)$-$2$-sensitive. Therefore, the $\beta$-$n$-sensitivity is not preserved under iterations  in general.

\end{example}

In the above example $X$ is finite, but in the next example $X$ is  infinite and has no isolated points.

\begin{example}

Let $f:S^1\rightarrow S^1$ be the map defined by $f(e^{2\pi i\theta})=e^{2\pi i(\theta+\frac{1}{8})}$, where $S^1$ is a unit circle on the complex plane, $\theta\in [0,1]$.

With the similar argument, it clear that $(S^1,f)$ is $(2,3,4)$-$3$-sensitive and $(2,4)$-$2$-sensitive but neither $(2,3,3,4)$-$4$-sensitive nor $(8,16)$-$2$-sensitive. Besides, it is obvious that $(S^1,f)$ is not sensitive, which shows that there is not relationship between the $\beta$-$n$-sensitivity and sensitivity.

\end{example}

\begin{theorem}\label{thsy}
Let $n\in \mathbb{N}$ with $n\geq2$. If $(X,f)$ is thickly syndetically $n$-sensitive, then it is semi-strongly $\beta$-$n$-sensitive.

\end{theorem}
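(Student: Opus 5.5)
The plan is to reduce $\beta$-$n$-sensitivity to finding a \emph{single} time $k$ at which, for every $i$, the iterate $f^{m_ik}$ already separates $n$ points of $U$. Lemma \ref{yinli1} then combines these separate families into one family of $n$ points separated under the mixed iterates. Let $\delta>0$ be the constant of thick syndetic $n$-sensitivity, and fix $\beta=(m_1,\dots,m_n)\in\mathbb{N}^n$ and a nonempty open set $U$. Put $A=N_f(U,n,\delta)$, which is thickly syndetic by hypothesis. I will show that $(X,f)$ is $\beta$-$n$-sensitive with constant $\delta/2$. Since this constant does not depend on $\beta$, this gives semi-strong $\beta$-$n$-sensitivity.

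\emph{Step 1 (dilation).} For $m\in\mathbb{N}$ set $A/m=\{k\in\mathbb{N}: mk\in A\}$. I claim $A/m$ is thickly syndetic. Given $l$, the set of $p$ with $[p,p+ml]\subseteq A$ is syndetic, say with gaps at most $G$. For each such $p$, write $q=\lceil p/m\rceil$. Then $m\cdot\{q,\dots,q+l-1\}\subseteq[p,p+ml]\subseteq A$, so $\{q,\dots,q+l-1\}\subseteq A/m$. Consecutive admissible $p$ differ by at most $G$, so the corresponding $q$ differ by at most $G/m+1$. Hence the set of such $q$ is syndetic, which proves the claim.

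\emph{Step 2 (intersection).} The sets $A/m_1,\dots,A/m_n$ are all thickly syndetic, so their intersection is thickly syndetic, and in particular nonempty. Here I use the standard fact that thickly syndetic sets form a filter. It is equivalent, by duality, to partition regularity of piecewise syndetic sets (Brown's lemma), since a set is thickly syndetic iff its complement is not piecewise syndetic. I expect this step to be the main obstacle if one insists on a self-contained argument. A naive attempt fails: one would look for one long block $[p,p+l]\subseteq A$ that contains all the multiples $m_ik$ simultaneously. That requires $l$ to be comparable to $p$, which the definition does not directly provide. So I will rely on the filter property rather than a direct block argument.

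\emph{Step 3 (combination).} Choose $k\in\bigcap_{i=1}^n A/m_i$. For each $i$ we have $m_ik\in N_f(U,n,\delta)$, so there is a set $A_i=\{x_{i1},\dots,x_{in}\}\subseteq U$ with $\min_{j<l}d(f^{m_ik}(x_{ij}),f^{m_ik}(x_{il}))>\delta$. This is exactly the hypothesis of Lemma \ref{yinli1} with $a=\delta$. The lemma yields $y_i\in A_i\subseteq U$ with $\min_{1\le j<l\le n}d(f^{m_jk}(y_j),f^{m_lk}(y_l))>\delta/2$. Hence $k\in N_f(U,\beta,\delta/2)$, which shows $N_f(U,\beta,\delta/2)\neq\emptyset$ for every nonempty open $U$.
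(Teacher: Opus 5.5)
Your proof is correct. It reaches the common time by a different mechanism than the paper.

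\textbf{The paper's route.} The paper keeps the good times $m\in N_f(U,n,\delta)$ as they are and writes $m=\beta_l p+j$ with $0\le j<\beta_l$. It then uses a uniform modulus of continuity $\alpha_l$ for the maps $f^q$, $0\le q<\beta_l$, to pull the $\delta$-separation at time $m$ back to a $\lambda$-separation at time $\beta_l p$, where $\lambda=\min_l\alpha_l$. Next it asserts that the quotients $p$ form a thickly syndetic set for each $l$ and intersects these sets. Finally it applies Lemma \ref{yinli1} and gets the constant $\lambda/2$.

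\textbf{What your route buys.} Your dilation step shows this perturbation is unnecessary, because $\{k: m_ik\in A\}$ is itself thickly syndetic and so exact multiples suffice. This gains several things:
\begin{itemize}
\item you never use continuity of $f$;
\item the argument works on an arbitrary metric space, whereas the paper's modulus step tacitly needs compactness or uniform continuity, which is not assumed;
\item your constant $\delta/2$ does not depend on $\beta$.
\end{itemize}
The paper's route buys only a marginally simpler combinatorial step (quotients instead of exact multiples), and pays for it with this analytic input.

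\textbf{Time-dependent tuples.} You correctly let the separated $n$-tuple depend on the time $m_ik$, feeding possibly different sets $A_i$ into Lemma \ref{yinli1}. The paper's proof instead fixes a single tuple $x_1,\dots,x_n$ for all times in $A$, which the definition of thick syndetic $n$-sensitivity does not provide.

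\textbf{Step 2 is not really an obstacle.} The filter property has a short self-contained proof. Suppose every window $\{i,\dots,i+M\}$ contains the start $m'$ of a run $[m',m'+l]\subseteq B$. Then every run $[m,m+M+l]\subseteq A$ contains such a run $[m',m'+l]$, and that run lies in $A\cap B$. Since runs of $A$ of this length start syndetically, runs of $A\cap B$ of length $l+1$ also start syndetically, with the gap bound enlarged by at most $M$. So you do not need Brown's lemma.
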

\begin{proof}
Let $\beta=(\beta_1,\beta_2,...,\beta_n)\in \mathbb{N}^n$. For any nonempty open set $U\subset X$, since $(X,f)$ is thickly syndetically $n-$sensitive with the sensitive constant $\delta$, there exist thickly syndetical set $A$ and $n$ points $x_1,x_2,...,x_n\in X$ such that $$\min_{1\leq i<j\leq n}d(f^{m_k}(x_{i}),f^{m_k}(x_{j}))> \delta$$ for any $m_k\in A$.
For each $l=1,2,...,n$, take $\alpha_l>0$ such that $d(a,b)\leq\alpha_i$ implies $d(f^q(a),f^q(b))\leq \delta$ for any $a,b\in X$ and $0\leq q\leq m_l-1$. Denote $\lambda=\min_{1\leq l\leq n}\{\alpha_l\}$. We claim that $\frac{\lambda}{2}$ does the job. Firstly, we may assume that $m_k>\max_{1\leq l\leq n}\{\beta_l\}$ for any $m_k\in A$. Then for each $l=1,2,...,n$ and each $k\in \mathbb{N}$ there exist $p_{lk},j_{lk}$ such that $m_k=\beta_lp_{lk}+j_{lk}$, where $p_{lk}\in \mathbb{N}, j_{lk}\in\{1,2,...,\beta_l-1\}$.
Thus, we have $$\min_{1\leq i<j\leq n}d(f^{\beta_lp_{lk}}(x_{i}),f^{\beta_lp_{lk}}(x_{j}))>\alpha_i$$ for each $l=1,2,...,n$ and each $k\in \mathbb{N}$. Further, $$\min_{1\leq i<j\leq n}d(f^{\beta_ip_{ik}}(x_{i}),f^{\beta_ip_{ik}}(x_{j}))>\lambda$$ for each $i=1,2,...,n$ and each $k\in \mathbb{N}$. Since $\{m_k\}_{k=1}^\infty$ is thickly syndetical set, $\{p_{lk}\}_{k=1}^\infty$ is also thickly syndetical set for each $l=1,2,...,n$. Then $\cap_{1\leq l\leq n}\{p_{lk}\}_{k=1}^\infty$ is thickly syndetical set. Take $r\in \cap_{1\leq l\leq n}\{p_{lk}\}_{k=1}^\infty$. Then $$\min_{1\leq i<j\leq n}d(f^{r\beta_l}(x_{i}),f^{r\beta_l}(x_{j}))>\lambda.$$
By Lemma \ref{yinli1},
$$\min_{1\leq i<j\leq n}d(f^{r\beta_i}(x_{i}),f^{r\beta_j}(x_{j}))>\frac{\lambda}{2},$$
which shows that $(X,f)$ is $\beta$-$n$-sensitive.  For the arbitrariness of $\beta$,  it
follows that $(X,f)$ is semi-strongly $\beta$-$n$-sensitive.
\end{proof}
From the proof of the Theorem \ref{thsy}, we can get the following stronger results.
\begin{corollary}\label{cos}
Let $n\in \mathbb{N}$ with $n\geq2$.  If $(X,f)$ is thickly syndetically $n$-sensitive, then it is thickly syndetically semi-strongly $\beta$-$n$-sensitive.
\end{corollary}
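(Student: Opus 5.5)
We aim to show: there is $\delta'>0$ such that for every $\beta=(\beta_1,\dots,\beta_n)\in\mathbb{N}^n$ and every nonempty open $U$, the set $N_f(U,\beta,\delta')$ is thickly syndetic. The constant may depend on $\beta$, since semi-strong sensitivity only asks for each $\beta$ separately. The approach is to rerun the proof of Theorem \ref{thsy}. This time we keep track of the whole set of good times rather than extracting a single $r$.

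\textbf{Step 1: the constant.} Fix $\beta$ and let $\delta$ be the thickly syndetic $n$-sensitivity constant. Put $M=\max_l\beta_l$. By uniform continuity of $f,f^2,\dots,f^{M-1}$, choose $\lambda>0$ with the following property: if $d(f^q a,f^q b)\le\lambda$ for some $0\le q$, then $d(f^{q+j}a,f^{q+j}b)\le\delta$ for all $0\le j<M$. Strictly this needs uniform continuity, e.g. $X$ compact, which the paper tacitly uses in Theorem \ref{thsy}. We will take $\delta'=\lambda/2$.

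\textbf{Step 2: a thickly syndetic set of good times.} Fix $U$ and let $A=N_f(U,n,\delta)$, which is thickly syndetic. The points realizing $k\in A$ may vary with $k$. That is harmless, because $N_f(U,\beta,\delta')$ only asks for some $n$-tuple at each time. For each $l$, define
$$P_l=\{p\in\mathbb{N}:\beta_l p+j\in A \text{ for all } 0\le j<\beta_l\}.$$
For $p\in P_l$, pick points witnessing $\beta_l p\in A$; they are $\delta$-separated at time $\beta_l p$.

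Now set $P=\bigcap_l P_l$. I would check that each $P_l$ is thickly syndetic, and then that $P$ is. The first claim holds because a run $[m,m+L]\subseteq A$ contains the full blocks $[\beta_l p,\beta_l p+\beta_l-1]$ for all $p$ in a run of length about $L/\beta_l$. Such runs occur syndetically, so the runs in $P_l$ occur syndetically as well. The second claim uses that finite intersections of thickly syndetic sets are thickly syndetic. This follows from the characterization ``complement is not piecewise syndetic'', or directly: for each $L$, require the runs $[m,m+ML]\subseteq A$, which occur syndetically and force $[m/M,\,m/M+L]$ into every $P_l$ up to rounding. I will do the direct argument to avoid citing filter duality.

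\textbf{Step 3: separation and conclusion.} Fix $r\in P$. For each $l$ there are $n$ points of $U$ that are $\delta$-separated at time $\beta_l r$; hence they are $\lambda$-separated there by the choice of $\lambda$. This is where the choice of $\lambda$ actually enters, if one instead wants to pass to times $\beta_l r+j$; in the version above, separation by $\delta\ge\lambda$ holds outright. Lemma \ref{yinli1} then gives $y_l$, taken from the $l$-th tuple, with
$$\min_{i<j} d\bigl(f^{\beta_i r}(y_i),f^{\beta_j r}(y_j)\bigr)>\lambda/2 .$$
So $P\subseteq N_f(U,\beta,\lambda/2)$. Supersets of thickly syndetic sets are thickly syndetic, which finishes the proof.

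The main obstacle is Step 2, namely that the dilated and intersected set $P$ is still thickly syndetic. Everything else is the argument of Theorem \ref{thsy} verbatim. Note also that Lemma \ref{yinli1} needs $n$ separated points per coordinate, which is exactly what $n$-sensitivity supplies.
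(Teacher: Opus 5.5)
Your overall strategy is correct and is essentially the paper's. The paper proves this corollary only by pointing back to the proof of Theorem \ref{thsy}. There the admissible times $r$ form the intersection over $l$ of the sets $\{p_{lk}\}_k$ (the quotients $\lfloor m/\beta_l\rfloor$, $m\in A$), each such $r$ lies in $N_f(U,\beta,\lambda/2)$ by Lemma \ref{yinli1}, and so the set of good times is thickly syndetic. Your version is cleaner than the paper's in two ways:
\begin{itemize}
\item You let the witnessing $n$-tuple depend on the time; the paper wrongly fixes one tuple $x_1,\dots,x_n$ for all of $A$.
\item Every $r\in P$ has $\beta_l r\in A$ itself, so you never need the uniform-continuity constant. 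Step 1 can be deleted, $\delta'=\delta/2$ works for every $\beta$ at once, and no compactness of $X$ is needed. The paper's use of uniform continuity is an unstated hypothesis. You could also simply take $P_l=\{p:\beta_l p\in A\}$.
\end{itemize}

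The one real error is the ``direct argument'' you propose for the intersection in Step 2. A single run $[m,m+ML]\subseteq A$ does not put $[m/M,m/M+L]$ into every $P_l$. For $p$ near $m/M$ you get $\beta_l p\approx \beta_l m/M$, which lies far to the left of $m$ when $\beta_l<M$, so the run says nothing about it. More generally, $p\in\bigcap_l P_l$ requires membership of $A$ at the widely spread times $\beta_1p,\dots,\beta_np$, and for large $m$ no single run of $A$ contains all of them.

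What you need is the filter property you mentioned, which the paper also uses without proof. It has a short direct proof. Let $A',B'$ be thickly syndetic, fix $L$, and suppose the starting points $m$ of runs $[m,m+L]\subseteq A'$ form a syndetic set with gaps at most $g$. The runs $[m_0,m_0+g+L]\subseteq B'$ also start syndetically, say with gaps at most $h$. Each such run contains a run of length $L$ of $A'$, and that run lies in $A'\cap B'$. Hence runs of length $L$ in $A'\cap B'$ start syndetically, with gaps at most $g+h$. Apply this $n-1$ times to $P_1,\dots,P_n$, or cite the duality with piecewise syndetic sets. With that replacement your proof is complete.
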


\begin{corollary}\label{co1}
Let $X$ be locally connected. Then $(X,f)$ is thickly syndetically sensitive if and only if it is  thickly syndetically strongly $\beta$-$n$-sensitive.
\end{corollary}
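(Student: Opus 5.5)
The equivalence will be proved by showing each direction separately. For the reverse implication, we take $n=2$ and $\beta=(1,1)\in\mathbb{N}^2$. Then $N_f(U,\beta,\delta)=N_f(U,2,\delta)$ for every nonempty open $U$. So if $(X,f)$ is thickly syndetically strongly $\beta$-$n$-sensitive, in particular thickly syndetically $(1,1)$-$2$-sensitive, it is thickly syndetically sensitive with the same constant. This direction does not need local connectedness.

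For the forward direction, the plan has two steps: first upgrade thickly syndetic sensitivity to thickly syndetic $n$-sensitivity for every $n\ge 2$ using local connectedness, then apply Corollary \ref{cos} for each $n$.

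\textbf{Step 1.} Fix $n\ge2$ and a nonempty open $U$, and choose a connected nonempty open $V\subset U$. Thickly syndetic sensitivity with constant $\delta$ gives a thickly syndetic set $A=N_f(V,2,\delta)$. For each $k\in A$ there are $x,y\in V$ with $d(f^k(x),f^k(y))>\delta$. Since $f^k(V)$ is connected and $z\mapsto d(f^k(x),z)$ is continuous on it, this function attains every value in $[0,\delta]$, exactly as in the proof of Theorem \ref{th1}. Hence we can pick $x=x_1,\dots,x_n\in V$ with $d(f^k(x_1),f^k(x_j))=\frac{j-1}{n-1}\delta$.

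The difficulty is that equal distances from $f^k(x_1)$ only separate the images along the ``radial'' direction, not pairwise. So instead I will build the points inductively, using connectedness of $f^k(V)$ and the fact that it has diameter $>\delta$. Concretely: given chosen images $z_1,\dots,z_j$ in $f^k(V)$, the set $f^k(V)\setminus\bigcup_i B(z_i,\delta/(2n))$ is nonempty, since a connected set of diameter $>\delta$ cannot be covered by $j<n$ balls of radius $\delta/(2n)$ after arranging the $z_i$ along a chain. This gives $N_f(V,n,\delta/(2n))\supseteq A$. The same per-$k$ argument is presumably the content of the cited fact \cite{a1} that sensitivity implies $n$-sensitivity on locally connected spaces, and I would invoke that argument for each fixed $k$. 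Thus $N_f(U,n,\delta')\supseteq A$ is thickly syndetic, with $\delta'$ depending only on $n$ and $\delta$.

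\textbf{Step 2.} Corollary \ref{cos} turns thickly syndetic $n$-sensitivity into thickly syndetic $\beta$-$n$-sensitivity for every $\beta\in\mathbb{N}^n$. Letting $n$ range over all $n\ge2$ gives the strong version.

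The main obstacle is Step 1: making the pairwise separation of $n$ points valid at every single time $k\in A$, so that the whole thickly syndetic set is preserved. The reduction via Corollary \ref{cos}, whose proof (Theorem \ref{thsy}) passes through Lemma \ref{yinli1}, is then routine.
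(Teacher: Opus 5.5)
Your proposal is correct and is essentially the paper's proof: on a connected open $V\subset U$ you upgrade thickly syndetic sensitivity to thickly syndetic $n$-sensitivity time by time (the paper's ``similar argument in Theorem \ref{th1}''), then apply Corollary \ref{cos} for each $n$, and you read the converse off at $n=2$, $\beta=(1,1)$. One correction: the obstacle you describe does not exist, because the radially spaced points already satisfy $d(f^k(x_j),f^k(x_l))\ge|d(f^k(x_1),f^k(x_j))-d(f^k(x_1),f^k(x_l))|=\frac{|j-l|}{n-1}\delta\ge\frac{\delta}{n-1}$, so your covering detour is unnecessary. The detour is valid via the chain-of-overlapping-balls argument, but it only gives separation $\ge\delta/(2n)$, so you must take a constant strictly smaller than $\delta/(2n)$.
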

\begin{proof}
For any $n\in \mathbb{N}$, since $(X,f)$ is thickly syndetically sensitive and locally connected, $(X,f)$ is thickly syndetically $n$-sensitive with the similar argument in Theorem \ref{th1}. Then by Corollary \ref{cos}, we get the result.

Conversely, it is clear by definitions.
\end{proof}

\begin{corollary}\label{coc}
Let $n\in \mathbb{N}$ with $n\geq2$. If $(X,f)$ is cofinitely $n$-sensitive, then it is  thickly syndetically semi-strongly $\beta$-$n$-sensitive.
\end{corollary}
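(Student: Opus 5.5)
The plan is to reduce Corollary \ref{coc} to Corollary \ref{cos}. Everything then rests on one combinatorial fact: every cofinite subset of $\mathbb{N}$ is thickly syndetic. Suppose $(X,f)$ is cofinitely $n$-sensitive with constant $\delta>0$. Then for every nonempty open set $U\subseteq X$, the set $N_{f}(U,n,\delta)$ contains $[N_U,\infty)\cap\mathbb{N}$ for some $N_U\in\mathbb{N}$.

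Fix $l\in\mathbb{N}$ and consider
$$\{m\in\mathbb{N}: m+j\in N_{f}(U,n,\delta)\text{ for } 0\leq j\leq l\}.$$
This set contains $[N_U,\infty)\cap\mathbb{N}$. A set containing a tail of $\mathbb{N}$ is syndetic: take $M=N_U$; then every block $\{i,i+1,\dots,i+M\}$ meets $[N_U,\infty)$. Since $l$ was arbitrary, $N_{f}(U,n,\delta)$ is thickly syndetic. The same $\delta$ works for every $U$, so $(X,f)$ is thickly syndetically $n$-sensitive.

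Corollary \ref{cos} now gives that $(X,f)$ is thickly syndetically semi-strongly $\beta$-$n$-sensitive, which is the statement.

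The proof itself has no real obstacle, because all the dynamical work is contained in Theorem \ref{thsy} and Corollary \ref{cos}. The one point to check is that Corollary \ref{cos} requires only that $N_f(U,n,\delta)$ be thickly syndetic for each $U$, with a uniform $\delta$, and nothing more about the structure of the set. Cofiniteness supplies exactly that, with the same constant $\delta$.

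If one preferred not to rely on Corollary \ref{cos}, one could rerun the proof of Theorem \ref{thsy} with the thickly syndetic set $A$ replaced by a tail $[N,\infty)$. For each $l$, the quotient set $\{p_{lk}\}$ then contains a tail as well. A finite intersection of tails is again a tail, hence thickly syndetic. This would yield the conclusion directly, with sensitivity constant $\lambda/2$.
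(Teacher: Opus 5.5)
Your proposal is correct and is exactly the paper's argument: cofinite $n$-sensitivity gives thickly syndetic $n$-sensitivity with the same constant $\delta$, and Corollary~\ref{cos} then gives the conclusion. The paper asserts the first step without proof; your explicit check that any set containing a tail $[N_U,\infty)\cap\mathbb{N}$ is thickly syndetic fills it in. (This reads the paper's misprinted definition of cofinite $n$-sensitivity, which says ``syndetic set'', as intended, i.e.\ as ``cofinite set''.)
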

\begin{proof}
Since $(X,f)$ is cofinitely $n$-sensitive, $(X,f)$ is thickly syndetically $n$-sensitive. Then by Corollary \ref{cos}, we get the result.

\end{proof}

\begin{theorem}
Let $X$ be locally connected. Then $(X,f)$ is cofinitely sensitive if and only if it is cofinitely strongly $\beta$-$n $-sensitive.
\end{theorem}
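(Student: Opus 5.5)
The converse direction is immediate. Take $n=2$ and $\beta=(1,1)$. Then $N_f(U,\beta,\delta)=N_f(U,2,\delta)$, so cofinite strong $\beta$-$n$-sensitivity already gives cofinite sensitivity. All the work is in the forward direction, and I will follow the scheme of Corollary \ref{co1}, replacing "thickly syndetic" by "cofinite" throughout.

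\emph{Step 1: from cofinite sensitivity to cofinite $n$-sensitivity.} Fix $n\geq 2$ and let $\delta$ be a cofinite sensitivity constant. Given a nonempty open $U$, local connectedness gives a connected nonempty open $V\subset U$. There is $N$ such that for every $k\geq N$ we can pick $x,y\in V$ with $d(f^k(x),f^k(y))>\delta$. The set $f^k(V)$ is connected, so, as in the proof of Theorem \ref{th1}, the function $z\mapsto d(f^k(x),z)$ on $f^k(V)$ takes every value in $[0,\delta_k]$, where $\delta_k>\delta$. This yields $n$ points of $V$ whose $f^k$-images are pairwise more than $\delta/n$ apart. Since this works for every $k\geq N$, the set $N_f(V,n,\delta/n)\subseteq N_f(U,n,\delta/n)$ is cofinite. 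Hence $(X,f)$ is cofinitely $n$-sensitive with constant $\delta/n$, a constant that does not depend on $U$.

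\emph{Step 2: from cofinite $n$-sensitivity to cofinite $\beta$-$n$-sensitivity.} Fix $\beta=(\beta_1,\dots,\beta_n)$ and repeat the argument of Theorem \ref{thsy}, with one simplification: no intersection of thickly syndetic sets is needed. Let $M=\max_l\beta_l$. By uniform continuity of $f,\dots,f^{M}$ (on compact $X$, which the paper's setting implicitly uses) choose $\lambda>0$ such that $d(a,b)\leq\lambda$ implies $d(f^q(a),f^q(b))\leq\delta/n$ for all $0\leq q\leq M$.

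Now take $U$, and let $N$ be such that $N_f(U,n,\delta/n)\supseteq[N,\infty)$. For $k\geq N$ and each $l$, the number $\beta_l k\geq N$ lies in this set, so there are $n$ points $x_{l1},\dots,x_{ln}\in U$ with
\[
\min_{1\leq j<j'\leq n} d\bigl(f^{\beta_l k}(x_{lj}),f^{\beta_l k}(x_{lj'})\bigr)>\delta/n .
\]
For $\beta$ we need a $\beta$-dependent constant, and this is the only place continuity enters. In fact, in the cofinite case the multiples $\beta_l k$ land in the cofinite set directly, so no passage from $m_k$ to $\beta_l p$ is needed, and continuity can be avoided altogether. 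Apply Lemma \ref{yinli1} to the families $A_l=\{x_{l1},\dots,x_{ln}\}$, with $a=\delta/n$ and $m_l=\beta_l$. It produces $y_l\in A_l\subseteq U$ with
\[
\min_{j<l} d\bigl(f^{\beta_j k}(y_j),f^{\beta_l k}(y_l)\bigr)>\delta/(2n).
\]
Thus every $k\geq N$ lies in $N_f(U,\beta,\delta/(2n))$, which is therefore cofinite. This holds for every $\beta\in\mathbb{N}^n$ and every $n$, which is cofinite strong $\beta$-$n$-sensitivity.

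\emph{Main obstacle.} The delicate point is the constant. In Step 2 the constant $\delta/(2n)$ depends on $n$ but not on $\beta$ or $U$, and that is what each fixed-$\beta$ definition requires. In Step 1 the point to check is that the intermediate-value argument in $f^k(V)$ works simultaneously for all $k\geq N$; this holds because for each $k$ we re-choose the pair $x,y$ and the interpolating points. I will also verify that Lemma \ref{yinli1} needs only the $n$-point separation within each $A_l$ at the common time parameter $k$, which is exactly what we have.
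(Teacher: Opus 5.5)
Your proof is correct. Its first step is exactly the paper's first step: cofinite sensitivity plus local connectedness gives cofinite $n$-sensitivity via the intermediate-value argument of Theorem \ref{th1}.

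The second step is where you part ways with the paper. The paper simply cites Corollary \ref{coc}, which rests on the argument of Theorem \ref{thsy}. That argument writes $m_k=\beta_l p+j$, uses uniform continuity of $f,\dots,f^{\max\beta_l}$ to transfer separation from time $m_k$ back to time $\beta_l p$, and then intersects the resulting sets of $p$'s. As stated, that corollary only yields \emph{thickly syndetic} semi-strong $\beta$-$n$-sensitivity, so the paper's citation does not literally deliver the cofinite conclusion. One would have to rerun the argument with ``cofinite'' in place of ``thickly syndetic''.

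You instead observe that $\beta_l k\geq k\geq N$ for every $k\geq N$. This lets you apply Lemma \ref{yinli1} at every such $k$ directly, giving $[N,\infty)\subseteq N_f(U,\beta,\delta/(2n))$. You need no continuity or compactness beyond what Step 1 uses, and your constant is independent of $\beta$. So your route is both simpler than the paper's and actually establishes the cofinite statement as claimed.

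The only thing to clean up is presentation. The paragraph introducing $\lambda$ via uniform continuity on a compact $X$ is never used, and it contradicts your own remark that continuity is unnecessary. The paper also does not assume compactness. Delete that paragraph.
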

\begin{proof}
For any $n\in \mathbb{N}$, since $(X,f)$ is cofinitely sensitive and locally connected, $(X,f)$ is cofinitely $n$-sensitive with the similar argument in Theorem \ref{th1}. Then by Corollary \ref{coc}, we get the result.

Conversely, it is clear by definitions.

\end{proof}

The following theorem  improve and extend the \cite[Theorem 3.1]{a1} because of the thickly syndetical transitivity strictly weaken than topological mixing.
\begin{theorem}\label{thtr}
If $(X,f)$ is thickly syndetically transitive, then it is strongly $\beta$-$n$-sensitive.
\end{theorem}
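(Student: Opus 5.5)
The plan is to use the thick syndeticity of the hitting-time sets $N_f(U,V)$ to hit $n$ fixed, well-separated target balls simultaneously along the $n$ different time scales $m_1k,\dots,m_nk$. Nothing from the locally connected machinery of Theorem \ref{th1} is needed.

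\textbf{Step 1 (separated targets).} I assume $X$ is nondegenerate, i.e.\ has more than one point; otherwise the statement is vacuous or false. By thick syndetic transitivity, for nonempty open $U_1,V_1,U_2,V_2$ both $N_f(U_1,V_1)$ and $N_f(U_2,V_2)$ are thickly syndetic. A finite intersection of thickly syndetic sets is again thickly syndetic: intersecting syndetic sets that are thick in the required sense gives a syndetic set. Hence $N_f(U_1,V_1)\cap N_f(U_2,V_2)\neq\emptyset$, so $(X,f)$ is weakly mixing. A nondegenerate weakly mixing system has no isolated points, so $X$ is infinite. For any $n\ge 2$ I then fix $n$ distinct points $z_1,\dots,z_n$ and put
$$\delta=\tfrac14\min_{i\neq j}d(z_i,z_j),\qquad V_j=B(z_j,\delta).$$
Any $y_j\in V_j$ then satisfy $d(y_i,y_j)>2\delta$ for $i\neq j$. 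The constant $\delta$ depends only on $n$, not on $U$ or $\beta$, which is what $\beta$-$n$-sensitivity requires for each fixed $\beta$.

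\textbf{Step 2 (rescaling lemma).} I will show that if $A\subseteq\mathbb N$ is thickly syndetic and $m\in\mathbb N$, then
$$A/m:=\{k\in\mathbb N: mk\in A\}$$
is thickly syndetic. Fix $l$. The set $S$ of starting points $s$ with $\{s,\dots,s+m(l+1)\}\subseteq A$ is syndetic, say with gap bound $M$. For each $s\in S$, take $k=\lceil s/m\rceil$. Then $mk,m(k+1),\dots,m(k+l)$ all lie in $[s,s+m(l+1)]\subseteq A$. These values of $k$ form a syndetic set with gap at most $M/m+1$, so $\{k: k,\dots,k+l\in A/m\}$ is syndetic.

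\textbf{Step 3 (conclusion).} Given a nonempty open $U$ and $\beta=(m_1,\dots,m_n)$, let $A_j=N_f(U,V_j)$; each $A_j$ is thickly syndetic. By Step 2 and closure under finite intersections, $\bigcap_{j=1}^n A_j/m_j$ is thickly syndetic, hence nonempty. Pick $k$ in this intersection. Then $m_jk\in A_j$ for every $j$, so there are $x_j\in U$ with $f^{m_jk}(x_j)\in V_j$. Consequently
$$\min_{i<j}d\big(f^{m_ik}(x_i),f^{m_jk}(x_j)\big)>2\delta,$$
so $k\in N_f(U,\beta,2\delta)$. This holds for every $n$ and every $\beta\in\mathbb N^n$, which is strong $\beta$-$n$-sensitivity.

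\textbf{Main obstacle.} The main obstacle is bookkeeping rather than ideas: verifying carefully that thickly syndetic sets are closed under finite intersections and under the rescaling $A\mapsto A/m$ of Step 2, and making explicit the nondegeneracy assumption on $X$ that Step 1 relies on.
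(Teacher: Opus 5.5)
Your proof is correct, but it takes a different route from the paper. The paper argues in two stages. It first quotes a known result that thick syndetic transitivity implies thick syndetic $n$-sensitivity for every $n\ge 2$, and then applies Theorem~\ref{thsy}. That theorem does three things:
\begin{enumerate}
\item It writes each separation time $m$ as $\beta_l p+j$ with $0\le j<\beta_l$, and uses uniform continuity of $f^{q}$, $q<\beta_l$, to pull an $n$-tuple separated at time $m$ back to one separated at time $\beta_l p$.
\item It checks that the resulting quotient sets are thickly syndetic and intersects them over $l$.
\item It uses Lemma~\ref{yinli1} to pick one point from each of the $n$ separated $n$-tuples, halving the constant.
\end{enumerate}
You bypass $n$-sensitivity altogether. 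You fix $n$ separated target balls $V_j$, rescale the hitting-time sets $N_f(U,V_j)$ by $m_j$, and intersect. A single $k$ in the intersection then sends some point of $U$ into each $V_j$ at its own time $m_jk$.

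\textbf{What each route buys.} Your argument is self-contained and uses neither Lemma~\ref{yinli1} nor uniform continuity. Uniform continuity is automatic only for compact $X$, an assumption the paper never states. Your route also gives more:
\begin{itemize}
\item a constant $2\delta$ that is independent of $\beta$;
\item the fact that $N_f(U,\beta,2\delta)$ is itself thickly syndetic;
\item strong multi-$\beta$-$n$-sensitivity, if you also intersect over finitely many open sets $U_1,\dots,U_m$.
\end{itemize}
The paper's route is more modular: Theorem~\ref{thsy} applies to every thickly syndetically $n$-sensitive system, transitive or not. Your nondegeneracy hypothesis is genuinely necessary, since the one-point system is thickly syndetically transitive but not sensitive. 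The paper leaves this assumption implicit.

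\textbf{A step to write out.} You gesture at closure of thickly syndetic sets under finite intersections, but you should prove it, because syndetic sets alone are not closed under intersection (take the even and the odd numbers). Here is the argument.
\begin{itemize}
\item Given $l$, choose $g$ so that every window $\{i,\dots,i+g\}$ contains some $m$ with $\{m,\dots,m+l\}\subseteq B$, and put $L=g+l$.
\item Every $s$ with $\{s,\dots,s+L\}\subseteq A$ has such an $m$ in $\{s,\dots,s+g\}$, and then $\{m,\dots,m+l\}\subseteq A\cap B$.
\item These $s$ occur syndetically, so these $m$ do too.
\end{itemize}
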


\begin{proof}
By \cite[Proposition 3.3]{a2}, if $(X,f)$ is thickly syndetically transitive, then it is thickly syndetically $n-$sensitive for any $n\geq2$. Using Theorem \ref{thsy}, we can complete the proof.

\end{proof}

\begin{corollary}
If $(X,f)$ is thickly syndetically transitive, then it is  thickly syndetically strongly $\beta$-$n$-sensitive.
\end{corollary}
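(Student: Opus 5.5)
The plan is to chain two earlier results, exactly as in the proof of Theorem \ref{thtr}, but to finish with Corollary \ref{cos} in place of Theorem \ref{thsy}.

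First, fix $n\geq 2$ and an arbitrary vector $\beta=(\beta_1,\dots,\beta_n)\in\mathbb{N}^n$. Since $(X,f)$ is thickly syndetically transitive, \cite[Proposition 3.3]{a2} shows that $(X,f)$ is thickly syndetically $n$-sensitive for every $n\geq2$. Corollary \ref{cos} then says that $(X,f)$ is thickly syndetically semi-strongly $\beta$-$n$-sensitive. In other words, for each $\beta\in\mathbb{N}^n$ there is a $\delta>0$ such that $N_f(U,\beta,\delta)$ is thickly syndetic for every nonempty open $U$. Because $n\geq2$ and $\beta$ were arbitrary, this is thick syndetic $\beta$-$n$-sensitivity for every $n$ and every $\beta\in\mathbb{N}^n$, which is what thickly syndetically strong $\beta$-$n$-sensitivity means. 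Equivalently, one can run Theorem \ref{thtr} and observe that its proof yields thickly syndetic return sets rather than merely nonempty ones.

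The one step needing care is the content of Corollary \ref{cos}: the set of good times $r$ must be thickly syndetic, not just nonempty. The route I would check runs through the proof of Theorem \ref{thsy}. For a fixed open set $U$, let $A$ be the thickly syndetic set of times $m$ at which $n$ points of $U$ are $\delta$-separated under $f^m$. I would use uniform continuity of $f,\dots,f^{\max\beta_l}$ on the relevant iterates to pass to the smaller constant $\lambda$. The claim to verify is that the sets $\{p:\ \beta_l p+j\in A \text{ for some } 0\le j<\beta_l\}$ are thickly syndetic, and that their intersection over $l$ is thickly syndetic.

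The first part holds because a block $[m,m+L\beta_l]\subseteq A$ contains $\beta_l p$ for $L$ consecutive values of $p$. The intersection holds because finite intersections of thickly syndetic sets are thickly syndetic. Every $r$ in that intersection then gives, via Lemma \ref{yinli1}, points of $U$ witnessing $\beta$-separation with constant $\lambda/2$. So $N_f(U,\beta,\lambda/2)$ contains a thickly syndetic set, and is therefore thickly syndetic.
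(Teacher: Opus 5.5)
Your proposal is correct and follows the paper's proof: \cite[Proposition 3.3]{a2} gives thickly syndetic $n$-sensitivity for every $n\geq 2$, Corollary \ref{cos} upgrades this to thickly syndetic $\beta$-$n$-sensitivity for every $\beta\in\mathbb{N}^n$, and quantifying over all $n$ finishes. Your extra check of Corollary \ref{cos} is sound and more careful than the paper's proof of Theorem \ref{thsy}, because you let the separated $n$-tuple depend on the time, which Lemma \ref{yinli1} allows. Your block argument also gives $\beta_l p\in A$ directly, so the uniform-continuity step can be dropped and you can work with the constant $\delta/2$ instead of $\lambda/2$.
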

\begin{proof}
 With the similar proof of Corollary \ref{coc} and Theorem \ref{thtr}, we get the result.

\end{proof}

\begin{theorem}
Let $(X,f)$ be an $M-$system and Let $n\geq2$. If $(X,f)$ is thickly syndetically sensitive, then it is $\beta$-$n$-sensitive.
\end{theorem}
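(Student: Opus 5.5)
The plan is to reduce the statement to Theorem \ref{thsy}: I would show that a thickly syndetically sensitive $M$-system is thickly syndetically $n$-sensitive for every $n\geq2$. Theorem \ref{thsy} then gives semi-strong $\beta$-$n$-sensitivity, and in particular $\beta$-$n$-sensitivity for the given $\beta$. Recall that an $M$-system is a transitive system whose minimal points are dense. The known fact I would invoke (Huang, Kolyada, Zhang, and subsequent work on thick sensitivity) is that in such systems sensitivity upgrades to multi-sensitivity and, with thickly syndetic return times, to thickly syndetic $n$-sensitivity.

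The key steps are as follows.

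\begin{enumerate}
\item Fix a nonempty open set $U$ and let $\delta$ be the thickly syndetic sensitivity constant. By density of minimal points, pick a minimal point $x\in U$ and a neighbourhood $W\subset U$ of $x$.
\item Build $n$ open sets $W_1,\dots,W_n\subset U$ with pairwise separated images along a thickly syndetic set. Thickly syndetic sensitivity of $W$ gives points $y,z\in W$ with $d(f^k y,f^k z)>\delta$ for all $k$ in a thickly syndetic set $A$. By continuity, small open sets $W_1\ni y$ and $W_2\ni z$ keep this separation (greater than $\delta/2$) on any prescribed finite block of $A$.
\item Iterate step 2 inside $W_1$ and $W_2$, using transitivity to bring returns to $U$. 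This splits further until $n$ pieces are obtained.
\item Use the syndetic return times of the minimal point $x$ to $W$, together with the thickly syndetic property (the intersection of a thickly syndetic set with translates of a syndetic set stays large enough), to conclude that the set of $k$ at which all $n$ points are pairwise more than $\delta/2^{n}$ apart is thickly syndetic.
\end{enumerate}

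If this direct route is too heavy, there is a lighter alternative, since only nonemptiness of $N_f(U,\beta,\delta')$ is needed. Mimic the proof of Theorem \ref{thsy} with $n=2$ only: from thick syndetic sensitivity get, for each $l$, pairs separated at times $\beta_l p$ for a common $p$. Then use the $M$-system structure, namely transitivity plus a dense set of minimal points (hence a syndetic return set), to place $n$ such pairs inside $U$ simultaneously. Finally apply Lemma \ref{yinli1} to extract one point per coordinate with $\min_{i<j} d(f^{\beta_i p}x_i,f^{\beta_j p}x_j)>\lambda/2$.

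The main obstacle is step 4, or equivalently in the alternative the simultaneous placement: guaranteeing $n$ mutually separated points at a common time, not just pairs. I would first try the standard argument that in an $M$-system the set $N_f(U,V)$ is piecewise syndetic. Intersecting it with the thickly syndetic set $A$ gives a nonempty set, in fact a thick-in-syndetic set, so an inductive choice of nested open sets $U\supset W_1\supset\cdots$ can be made with a common time at each stage.
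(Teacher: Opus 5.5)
Your reduction is the same as the paper's. The paper's proof quotes an external result (Proposition~3.2 of its reference [a2]) saying that a thickly syndetically sensitive $M$-system is thickly syndetically $n$-sensitive, and then applies Theorem~\ref{thsy}. The problem is that this upgrade is false for $n\ge 5$. So your steps 3--4, and the ``simultaneous placement'' in your alternative, cannot be carried out, and the statement itself fails for $\beta=(1,1,1,1,1)$.

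Here is a counterexample. Take the one-sided Thue--Morse subshift $X$ generated by $\theta(0)=01$, $\theta(1)=10$, with fixed point $t$, shift $\sigma$, and $d(x,y)=2^{-\min\{i:\,x_i\neq y_i\}}$. It is minimal, hence an $M$-system.

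It is cofinitely sensitive, hence thickly syndetically sensitive, with constant $\frac12$. Suppose $w$ occurs in $t$ at position $p$ and $p+|w|\le 2^m$. Then $\sigma^p\theta^m(X\cap[0])\subseteq[w]$. For $k\ge 2^m-p$, the $k$-th coordinate of $\sigma^p\theta^m(x')$ is $x'_b\oplus t_o$, where $b=\lfloor (k+p)/2^m\rfloor\ge 1$ and $o=k+p-b2^m$. Because $X$ is invariant under $0\leftrightarrow 1$, $x'_b$ takes both values on $X\cap[0]$; otherwise $t_{i+b}=t_i\oplus e$ for all $i$, and $t$ would be periodic.

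However, $X$ is not $5$-sensitive. Given $\delta=2^{-s}$, choose $m$ with $2^m\ge s$. Then take $w$ long enough that, by recognizability, every $x\in[w]$ has the same $J$ in the decomposition $x=\sigma^J\theta^m(x')$. For $x\in[w]$ the window $x_{[k,k+s)}$ then depends only on the two symbols $x'_bx'_{b+1}$. So at most four points of $[w]$ are pairwise $\delta$-apart at any time $k$, i.e.\ $N_\sigma([w],(1,1,1,1,1),\delta)=\emptyset$. This reflects the fact that fibres of Thue--Morse over its odometer factor have at most four points.

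Two steps of your sketch also go wrong on their own.

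\emph{Step 2.} You treat thick syndetic sensitivity as giving one pair $y,z$ separated at every $k\in A$. The definition only gives, for each $k$ in a thickly syndetic set, a pair that depends on $k$. The paper's proof of Theorem~\ref{thsy} makes the same slip. That theorem is nevertheless true, by a different argument:
\begin{itemize}
\item pick $r$ with $\beta_l r\in N_f(U,n,\delta)$ for all $l$, which is possible because each set $\{r:\beta_l r\in N_f(U,n,\delta)\}$ is thickly syndetic;
\item then apply Lemma~\ref{yinli1} to the $n$-point sets obtained for each $l$.
\end{itemize}

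\emph{Your alternative.} Pairs cannot feed Lemma~\ref{yinli1}. The lemma needs, for each coordinate $l$, $n$ points whose images at time $\beta_l p$ are pairwise separated. With only pairs this fails: for $\beta=(1,1,1)$ with all pairs landing near the same two points, no choice of one point per pair is pairwise separated.

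What is missing is genuine $n$-fold separation at a common time, and the $M$-system hypothesis does not supply it. One hypothesis that does is local connectedness. For a connected open $V\subseteq U$, the intermediate-value argument of Theorem~\ref{th1} gives $N_f(U,n,\frac{\delta}{n-1})\supseteq N_f(V,2,\delta)$. That yields thick syndetic $n$-sensitivity, after which Theorem~\ref{thsy} applies.
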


\begin{proof}
By \cite[Proposition 3.2]{a2}, if $(X,f)$ is thickly syndetically sensitive, then it is thickly syndetically $n-$sensitive. Using Theorem \ref{thsy}, we can complete the proof.

\end{proof}

The following theorem removes the condition `$f$ is surjective' in \cite[Corollary 3.2]{a1}.
\begin{theorem}
If $(X,f)$ is  chain-mixing and has  shadowing property, then it is strongly multi-$\beta$-$n$-sensitive.
\end{theorem}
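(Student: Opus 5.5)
The plan is to prove directly that for every $n\ge 2$, every $\beta=(m_1,\dots,m_n)\in\mathbb{N}^n$, every $m\in\mathbb{N}$ and all nonempty open $U_1,\dots,U_m$, there is a single $k$ lying in every $N_f(U_i,\beta,\delta)$. Here $\delta$ is one fixed constant determined by the space. We may assume $X$ is nontrivial, i.e. has at least $n$ points; otherwise the notion is vacuous or fails trivially, and this degenerate case is set aside. With that assumption, fix $n$ points $w_1,\dots,w_n\in X$ with pairwise distances $>4\delta$ for a suitable $\delta>0$. Choosing $\delta$ uniformly in $n$ is a subtlety: for a single $\beta$-$n$ statement we simply let $\delta$ depend on $n$, which is what the definition of multi-$\beta$-$n$-sensitivity for fixed $\beta$ requires.

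The engine is the standard fact that a chain-mixing system with shadowing is mixing on its nonwandering part. More precisely, I would use the shadowing lemma directly. Let $\varepsilon=\delta$, and let $\eta>0$ be a shadowing constant for $\varepsilon$. By chain mixing, for any points $a,b$ there is $N(a,b)$ such that an $\eta$-chain from $a$ to $b$ of every length $\ge N(a,b)$ exists.

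The construction then runs as follows.
\begin{enumerate}[(1)]
\item For each $i\le m$, pick $u_i\in U_i$ with $B(u_i,2\varepsilon)\subset U_i$. This requires shrinking $\varepsilon$ so that it is below the radii of the balls inside the $U_i$. Since $\delta$ must not depend on the $U_i$, I instead take shadowing at a small scale $\varepsilon_0\le\min(\delta, r_i)$, where $r_i$ is a radius with $B(u_i,r_i)\subset U_i$. The point is that separation is tested at the large scale $\delta$ while shadowing happens at the small scale $\varepsilon_0$; these are independent.
\item Using chain mixing, choose $N$ larger than all $N(u_i,w_j)$ and $N(f(w_j),\cdot)$, and fix $k\ge N$.
\item For each $i$ and $j$, build an $\eta_0$-pseudo-orbit that starts at $u_i$ and reaches $w_j$ exactly at time $m_jk$. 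For $j$ with equal $m_j$ this is the same time; that case is handled by noting that equal exponents still require distinct targets $w_j$, which is fine because we use different points $x_{ij}$.
\item Shadow the pseudo-orbit by a true point $x_{ij}\in B(u_i,\varepsilon_0)\subset U_i$ with $d(f^{m_jk}(x_{ij}),w_j)<\varepsilon_0\le\delta$.
\end{enumerate}

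The separation estimate is then immediate. For $j\ne l$,
\[
d(f^{m_jk}(x_{ij}),f^{m_lk}(x_{il}))>4\delta-2\delta=2\delta ,
\]
so $k\in N_f(U_i,\beta,\delta)$ for every $i$ simultaneously. No surjectivity is needed, because we never pull back points; we only push forward along shadowed chains.

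The main obstacle is step (3): producing chains from $u_i$ to $w_j$ of the exact prescribed length $m_jk$, uniformly in $i$ and $j$ for one common $k$. Chain mixing gives chains of all sufficiently large lengths between any two points, so I would take $k$ beyond the maximum of finitely many thresholds $N(u_i,w_j)/m_j$. This finiteness, over finitely many $i$ and $j$, is exactly what makes a common $k$ possible. I would also double-check that chain mixing on all of $X$ is available: the hypothesis is stated for $X$, and $X$ is presumably a compact metric space, which is needed for shadowing constants and uniform continuity.
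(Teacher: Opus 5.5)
Your argument is correct, and it takes a genuinely different route from the paper. The paper's proof consists of two citations: chain mixing plus shadowing gives $\Delta$-mixing, hence mixing \cite[Theorem 3.6]{a3}, and mixing gives strong multi-$\beta$-$n$-sensitivity \cite[Theorem 3.1]{a1}.

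You instead work directly from the definitions. The constant $\delta$ is fixed once and for all by $n$ points $w_1,\dots,w_n$ at mutual distance $>4\delta$. The shadowing scale $\varepsilon_0$, and hence $\eta_0$ and the chain-mixing thresholds, is allowed to depend on the $U_i$. For each pair $(i,j)$ you shadow an $\eta_0$-chain of length exactly $m_jk$ from $u_i$ to $w_j$ by a point $x_{ij}\in U_i$.

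In effect you inline the standard proof that chain mixing with shadowing implies mixing, fused with the mixing-to-sensitivity argument. The latter is essentially what the paper itself does in its final theorem, using small balls around $n$ separated points. Your version buys several things:
\begin{enumerate}
\item it is self-contained;
\item it uses only the finite-chain form of shadowing rather than the stronger $\Delta$-mixing;
\item it makes transparent why surjectivity never enters;
\item it shows that every $k\ge N$ works, so $\bigcap_{i} N_f(U_i,\beta,\delta)$ is in fact cofinite.
\end{enumerate}
The paper's version is shorter but rests entirely on the cited results.

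One small tightening: rather than setting aside the case where $X$ has fewer than $n$ points, you can dispose of it. Suppose $X$ is finite with at least two points. For $\eta$ below the minimal distance between distinct points, every $\eta$-chain is a true orbit segment. Chain mixing would then force $f^t(x)$ to equal two distinct points for all large $t$, which is impossible. Hence under the hypotheses $X$ is either a single point or infinite. The single-point case is where the statement fails and must be excluded, implicitly in the paper as well. In the infinite case, suitable $w_1,\dots,w_n$ exist for every $n$.

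Also, the extra threshold $N(f(w_j),\cdot)$ in your step (2) is unnecessary.
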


\begin{proof}
By \cite[Theorem 3.6]{a3}, if $(X,f)$ is $(X,f)$ is  chain-mixing and has  shadowing property, then it is $\Delta$-mixing, and hence it is mixing. Then using  \cite[Theorem 3.1]{a1}, we can complete the proof.

\end{proof}

In \cite[Theorem 4.1]{b16}, it was proved that  if $(X,f)$ is multi-transitive and
the periodic set of $f$ is dense in $X$, then $(X,f)$ is $\mathcal{N}$-sensitive. On the one hand, by \cite{a4} and the fact that multi-transitivity implies total transitivity, multi-transitivity and the dense periodic set of $f$ imply weakly mixing. Besides, multi-transitivity and weakly mixing is equivalent to strong multi-transitivity \cite[Proposition 3.3]{a5}. Therefore, the condition of the following Theorem \ref{thmul} is weaker than the condition of \cite[Theorem 4.1]{b16}. On the other hand, strongly multi-sensitivity is stronger than $\mathcal{N}$-sensitivity. So the following Theorem \ref{thmul} improves the \cite[Theorem 4.1]{b16}.
\begin{theorem}\label{thmul}
 If $(X,f)$ is strongly multi-transitive, then it is strongly multi-sensitive.
\end{theorem}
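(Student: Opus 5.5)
The plan is a direct construction. We use strong multi-transitivity with respect to a "doubled" vector, so that a single time $k$ sends each $U_i$ under $f^{m_i k}$ into two fixed, well-separated open sets at once. The sensitivity constant will depend only on the space $X$, not on $\beta$ or on the open sets, which is exactly what strong multi-sensitivity requires.

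\emph{Choice of the constant.} We assume $X$ has at least two points. For a one-point space no sensitivity notion can hold, so this degenerate case must be excluded, implicitly or explicitly. Fix $p,q\in X$ with $D=d(p,q)>0$, and put $\delta=D/4$. Let $V_1=B(p,\delta)$ and $V_2=B(q,\delta)$. By the triangle inequality, every $a\in V_1$ and $b\in V_2$ satisfy $d(a,b)>D-2\delta=2\delta>\delta$.

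\emph{Main step.} Let $n\geq 2$, let $\beta=(m_1,\dots,m_n)\in\mathbb{N}^n$, and let $U_1,\dots,U_n$ be arbitrary nonempty open sets. Consider the vector
$$\gamma=(m_1,m_1,m_2,m_2,\dots,m_n,m_n)\in\mathbb{N}^{2n}.$$
Since $(X,f)$ is strongly multi-transitive, it is multi-transitive with respect to $\gamma$, so $(X^{2n},f^{(\gamma)})$ is transitive. Apply this to the product open sets $U_1\times U_1\times U_2\times U_2\times\cdots\times U_n\times U_n$ and $V_1\times V_2\times V_1\times V_2\times\cdots\times V_1\times V_2$. We obtain $k\in\mathbb{N}$ such that, for each $i$, both $f^{m_i k}(U_i)\cap V_1\neq\emptyset$ and $f^{m_i k}(U_i)\cap V_2\neq\emptyset$ hold.

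\emph{Conclusion.} For each $i$ we can therefore pick $x_i,y_i\in U_i$ with $f^{m_ik}(x_i)\in V_1$ and $f^{m_ik}(y_i)\in V_2$. This gives $d((f^{m_i})^k(x_i),(f^{m_i})^k(y_i))>2\delta>\delta$, hence $k\in N_{f^{m_i}}(U_i,\delta)$ for every $i$. Thus $\bigcap_{1\leq i\leq n}N_{f^{m_i}}(U_i,\delta)\neq\emptyset$, so $(X,f)$ is multi-sensitive with respect to $\beta$ with constant $\delta$. Since $\delta$ does not depend on $n$ or $\beta$, $(X,f)$ is strongly multi-sensitive.

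There is no serious obstacle. The only points needing care are the choice of the doubled vector $\gamma$, which forces the same iterate to hit both $V_1$ and $V_2$, and the nondegeneracy assumption that $X$ has at least two points.
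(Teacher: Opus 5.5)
Your proposal is correct and is essentially the paper's own argument: you apply multi-transitivity with respect to a doubled vector, so that a single time $k$ sends each $U_i$ under $f^{m_ik}$ into both of two fixed separated open sets $V_1,V_2$. The differences are cosmetic: you interleave the doubled vector where the paper concatenates it, and you make explicit the choice of balls $V_1,V_2$ and the tacit assumption that $X$ has at least two points.
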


\begin{proof}
Take nonempty sets $V_1,V_2\subseteq X$ and $\delta>0$ such that $d(V_1,V_2)>\delta$. For any $m\in \mathbb{N}$ and any vector $a=(a_1,a_2,...,a_m)$, denote $b=(a_1,a_2,...,a_m,a_1,a_2,...,a_m)$, and for any nonempty sets $U_1,U_2,...,U_m\subseteq X$, since $(X,f)$ is strongly multi-transitive, the product system $(X^{2m},f^b)$ is transitive, that is, for $(U_1,U_2,...,U_m,U_1,U_2,...,U_m)$ and $(\underbrace{V_1,V_1,...,V_1}_{m-fold},\underbrace{V_2,V_2,...,V_2}_{m-fold})$, there exists $k\in \mathbb{N}$ such that $f^{ka_i}(U_i)\cap V_1\neq\emptyset$ and $f^{ka_i}(U_i)\cap V_2\neq\emptyset$ for each $i\in\{1,2,...,m\}$. Therefore, for every $i\in\{1,2,...,m\}$, there exist $x_{i},y_{i}\in U_i$ such that $f^{ka_i}(x_i)\in V_1$ and  $f^{ka_i}(y_i)\in V_2$. Since $d(V_1,V_2)>\delta$, $d(f^{ka_i}(x_i),f^{ka_i}(y_i))>\delta$, which implies that
$$\bigcap_{i=1}^m N_{f^{a_i}}(U_i,\delta)\neq\emptyset.$$
Therefore $(X,f)$ is strongly multi-sensitive.

\end{proof}

\begin{corollary}
Let $X$ be locally connected and $(X,f)$ be a minimal system. If $(X,f)$ is weakly mixing, then it is  strongly $\beta$-$n$-sensitive.
\end{corollary}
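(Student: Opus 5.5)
The plan is to chain together earlier results of the paper: minimality plus weak mixing gives strong multi-transitivity, which gives strong multi-sensitivity, which in a locally connected space gives strong $\beta$-$n$-sensitivity.

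\emph{Step 1: strong multi-transitivity.} I would show that a minimal weakly mixing system is strongly multi-transitive. Recall that weak mixing and multi-transitivity together are equivalent to strong multi-transitivity \cite[Proposition 3.3]{a5}. So it suffices to show that a minimal weakly mixing system is multi-transitive. For this I would invoke the known result (Kwietniak--Oprocha type, via \cite{a4,a5}) that weakly mixing minimal systems are multi-transitive. The idea is that minimality makes the hitting sets $N_f(U,V)$ syndetic, and weak mixing of a minimal system upgrades the product $(X^m, f\times f^2\times\cdots\times f^m)$ to be transitive. This is the step I expect to be the main obstacle. If a direct citation is unavailable, I would argue as follows. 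In a minimal weakly mixing system, for open $U,V$ the set $N_f(U,V)$ is thickly syndetic. Thickly syndetic sets have the finite intersection property and are closed under the dilations needed for $\{k: kj\in N_f(U_j,V_j)\}$. This is the delicate combinatorial point, and I would try to use the Glasner--Weiss result that minimal weakly mixing implies the system is ``mild mixing along polynomial/linear sequences'' (multiple recurrence of Furstenberg type) to obtain a common $k$.

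\emph{Step 2: strong multi-sensitivity.} Once $(X,f)$ is strongly multi-transitive, Theorem \ref{thmul} gives that it is strongly multi-sensitive. In particular, it is multi-sensitive with respect to $(1,2,\dots,N)$ for every $N$, i.e.\ it is $\mathcal{N}$ sensitive. We may assume $X$ has at least two points, since otherwise the statement is degenerate, so that sets $V_1,V_2$ at positive distance exist as the proof of Theorem \ref{thmul} requires.

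\emph{Step 3: conclusion.} Since $X$ is locally connected and $(X,f)$ is $\mathcal{N}$ sensitive, Theorem \ref{th2} yields that $(X,f)$ is strongly $\beta$-$n$-sensitive. Alternatively, for each fixed $\beta$ one may apply Theorem \ref{th1} directly using multi-sensitivity with respect to $\beta$.
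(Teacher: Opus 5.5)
Your chain (minimality plus weak mixing gives multi-transitivity, hence strong multi-transitivity via \cite[Proposition 3.3]{a5}, hence strong multi-sensitivity and $\mathcal{N}$ sensitivity by Theorem \ref{thmul}, hence strong $\beta$-$n$-sensitivity by Theorem \ref{th2}) is exactly the paper's proof; the paper takes the first implication from \cite[Corollary 7]{a6} rather than \cite{a4,a5}, and leaves implicit the \cite[Proposition 3.3]{a5} step that you state. Two side remarks: your Step 1 fallback already finishes without any Glasner--Weiss input, because for compact minimal weakly mixing systems every $N_f(U,V)$ is thickly syndetic, so each $\{k: jk\in N_f(U_j,V_j)\}$ is thickly syndetic, and thickly syndetic sets form a filter; and your restriction to $X$ with at least two points is a genuinely needed extra hypothesis rather than a harmless reduction, since the one-point system satisfies all the assumptions yet is not sensitive (the proof of Theorem \ref{thmul} tacitly needs it too).
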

\begin{proof}
From the \cite[Corollary 7]{a6}, it follows that weak mixing and multi-transitivity are
equivalent in a minimal system. Therefore, by Theorem \ref{thmul} and Theorem \ref{th2}, $(X,f)$ is  strongly $\beta$-$n$-sensitive.

\end{proof}

In  \cite[Corollary 3.4]{a1}, the authors proved that $n$-sensitivity is preserved under iterations. But in Example \ref{ex1}, it shows that $\beta$-$n$-sensitivity is not preserved under iterations in general. Now we consider the result for strong $\beta$-$n$-sensitivity and semi-strong $\beta$-$n$-sensitivity.
\begin{theorem}
For any $k\in\mathbb{N}$, $(X,f)$ is strongly $\beta$-$n$-sensitive (semi-strongly $\beta$-$n$-sensitive, respectively) if and only if $(X,f^k)$ is strongly $\beta$-$n$-sensitive (semi-strongly $\beta$-$n$-sensitive, respectively).
\end{theorem}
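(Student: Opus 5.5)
The plan is to prove both directions directly from the definition of $N_{f}(U,\beta,\delta)$. The key observation is the identity
\[
N_{f^k}(U,\beta,\delta)=\{p\in\mathbb{N}: kp\in N_f(U,\beta,\delta)\}\supseteq \emptyset,
\]
together with its companion $N_{f^k}(U,\beta,\delta)= N_f(U,k\beta,\delta)$ read on the multiplier side. Concretely, $(f^k)^{m_ip}=f^{(km_i)p}$, so for $\beta=(m_1,\dots,m_n)$ we have $p\in N_{f^k}(U,\beta,\delta)$ if and only if $p\in N_f(U,k\beta,\delta)$, where $k\beta=(km_1,\dots,km_n)\in\mathbb{N}^n$. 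This is pure bookkeeping and requires no continuity.

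\emph{($\Rightarrow$).} Assume $(X,f)$ is strongly (resp.\ semi-strongly) $\beta$-$n$-sensitive. Fix $n$ (any $n$ in the strong case, the given $n$ in the semi-strong case) and $\beta\in\mathbb{N}^n$. The vector $k\beta$ still lies in $\mathbb{N}^n$, so $(X,f)$ is $k\beta$-$n$-sensitive with some constant $\delta$. By the identity above, for every nonempty open $U$ we get $N_{f^k}(U,\beta,\delta)=N_f(U,k\beta,\delta)\neq\emptyset$. Hence $(X,f^k)$ is $\beta$-$n$-sensitive with the same $\delta$. Since $\beta$ was arbitrary (and so was $n$ in the strong case), $(X,f^k)$ is strongly (semi-strongly) $\beta$-$n$-sensitive. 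It is worth stressing why this does not contradict Example \ref{ex1}: we use sensitivity of $f$ with respect to the scaled vector $k\beta$, which the strong and semi-strong hypotheses provide but plain $\beta$-$n$-sensitivity does not.

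\emph{($\Leftarrow$).} Assume $(X,f^k)$ is strongly (semi-strongly) $\beta$-$n$-sensitive, and fix $\beta=(m_1,\dots,m_n)$. Apply the hypothesis to the same vector $\beta$ for $f^k$: there is $\delta>0$ such that for each nonempty open $U$ there exist $p$ and $x_1,\dots,x_n\in U$ with $\min_{i<j}d(f^{km_ip}(x_i),f^{km_jp}(x_j))>\delta$. Setting $q=kp\in\mathbb{N}$, the same points witness $q\in N_f(U,\beta,\delta)$, because $f^{km_ip}=f^{m_iq}$. Thus $(X,f)$ is $\beta$-$n$-sensitive with the same $\delta$, and taking all $\beta$ (and all $n$ in the strong case) gives the conclusion.

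I expect no real obstacle. The only points needing care are, first, to note that the scaled vector $k\beta$ remains admissible in $\mathbb{N}^n$ with the same length $n$, which is what lets the semi-strong version go through as well as the strong one. Second, the sensitivity constant $\delta$ may depend on the vector but not on $U$, and this is respected in both directions since each transfer keeps $\delta$ unchanged.
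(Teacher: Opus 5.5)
Your proof is correct and follows essentially the same route as the paper. For $(X,f)\Rightarrow(X,f^k)$ the paper also applies the hypothesis to the scaled vector $(ka_1,\dots,ka_n)$, and for $(X,f^k)\Rightarrow(X,f)$ it sets $q=kp$ exactly as you do; your identity $N_{f^k}(U,\beta,\delta)=N_f(U,k\beta,\delta)$ just packages this bookkeeping more cleanly (the appended ``$\supseteq\emptyset$'' is vacuous and can be dropped).
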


\begin{proof}
Sufficiency. Let $n\in \mathbb{N}$ and vector $\beta=(a_1,a_2,...,a_n)$. For any nonempty sets $U_1,U_2,...,U_n\subseteq X$, since $(X,f^k)$ is strongly $\beta-n-$sensitive with the sensitive constant $\delta$, there exists  $p\in \mathbb{N}$ such that
$$\min_{1\leq i<j\leq n}d(f^{ka_ip}(x_i),f^{ka_jp}(x_j))>\delta.$$
Take $q=kp$, then we have that
$$\min_{1\leq i<j\leq n}d(f^{a_iq}(x_i),f^{a_jq}(x_j))>\delta.$$
 Therefore $(X,f)$ is strongly $\beta$-$n$-sensitive.

Necessity. Let $n\in \mathbb{N}$ and vector $\beta=(a_1,a_2,...,a_n)$. Take $n_1=kn$ and $\beta_1=(ka_1,ka_2,...,ka_n)$. Since $(X,f)$ is strongly $\beta-n-$sensitive with the sensitive constant $\delta$, there exists  $p\in \mathbb{N}$ such that
$$\min_{1\leq i<j\leq n}d(f^{ka_ip}(x_i),f^{ka_jp}(x_j))>\delta.$$
Therefore $(X,f^k)$ is strongly $\beta$-$n$-sensitive.

With the similar argument, one can get that semi-strong $\beta$-$n$-sensitivity is also iteration invariant.

\end{proof}

Finally, we give a criteria for the existence of strong multi-$\beta$-$n$-sensitivity with respect to $\alpha$.
\begin{theorem}
 If $(X,f)$ is mixing, then it is strongly multi-$\beta$-$n$-sensitive with respect to $\alpha$.
\end{theorem}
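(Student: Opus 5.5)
Since $(X,f)$ is mixing, $N_f(U,V)$ is cofinite for every pair of nonempty open sets $U,V$. The plan is to exhibit, for every $U_i$ and every choice of $\beta$ and $\alpha$, the required $n$ points using only this cofiniteness. We may assume $X$ has at least $n$ points; otherwise every map on it is trivially non-mixing unless $X$ is a single point, and that degenerate case is excluded by taking $X$ nontrivial as in \cite[Theorem 3.1]{a1}. Since $X$ is Hausdorff and has at least $n$ points, fix nonempty open sets $W_1,\dots,W_n$ and $\delta>0$ with $d(W_j,W_l)>\delta$ for all $j\neq l$. For instance, pick $n$ distinct points, let $3\delta$ be less than their minimal mutual distance, and take balls of radius $\delta$ around them. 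This $\delta$ depends only on $X$ and $n$, which is what the definition requires.

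Now fix $n\ge2$, $\beta=(b_1,\dots,b_n)$, $\alpha=(a_1,\dots,a_m)$ and nonempty open sets $U_1,\dots,U_m$. For each $i\le m$ and $j\le n$, mixing gives $N_i^j$ such that $f^{t}(U_i)\cap W_j\neq\emptyset$ for all $t\ge N_i^j$. Let $N=\max_{i,j}N_i^j$ and choose any $k\in\mathbb{N}$ with $k\ge N$. Every exponent of the form $a_ib_jk$ is then at least $N$, because $a_i,b_j\ge1$.

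Hence for each $i$ and $j$ there is $x_{ij}\in U_i$ with $(f^{a_i})^{b_jk}(x_{ij})=f^{a_ib_jk}(x_{ij})\in W_j$. It follows that
\[
\min_{1\le j<l\le n} d\bigl((f^{a_i})^{b_jk}(x_{ij}),(f^{a_i})^{b_lk}(x_{il})\bigr)>\delta ,
\]
so $k\in N_{f^{a_i}}(U_i,\beta,\delta)$ for every $i$. Thus $k\in\bigcap_{i=1}^m N_{f^{a_i}}(U_i,\beta,\delta)$. Since $n$, $\beta$, $\alpha$ and the $U_i$ were arbitrary, $(X,f)$ is strongly multi-$\beta$-$n$-sensitive with respect to $\alpha$. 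In fact the argument shows more: the intersection is cofinite.

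The main obstacle is only bookkeeping. The definition indexes $\alpha$ against the open sets, with length $m$ rather than $n$, so we read $\alpha$ as having one entry per open set. The other point to check is that $\delta$ does not depend on $\beta$ or $\alpha$. The construction of the $W_j$ secures this, except that $\delta$ depends on $n$, which is permitted because the constant in each multi-$\beta$-$n$-sensitivity is allowed to depend on $n$.
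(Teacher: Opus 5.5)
Your proof is correct and follows essentially the same route as the paper: fix $n$ pairwise $\delta$-separated neighbourhoods of distinct points, use mixing to get one threshold $N$ beyond which every $U_i$ meets every separated set, and observe that all exponents $a_ib_jk$ exceed $N$ once $k\ge N$. Your bookkeeping is cleaner than the paper's. You index $\alpha$ by the open sets, let the points $x_{ij}$ depend on $k$, and exclude the one-point space explicitly, whereas the paper conflates the indices of $\alpha$ and $\beta$ and treats the chosen points as independent of $k$.
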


\begin{proof}
Let $n\in \mathbb{N}$ with $n\geq2$. And let $\beta=(b_1,b_2,...,b_n)\in \mathbb{N}^n$ and $\alpha=(a_1,a_2,...,a_n)\in \mathbb{N}^n$. Take $n$ distinct points $z_1,z_2,...,z_n$ of $X$ and let $\min_{1\leq i<j\leq n}d(z_i,z_j)=2\delta$. Let $0<\varepsilon<\frac{\delta}{4}$ and $V_i=\bar{B(z_i,\varepsilon)}$ for each $i=1,2,...,n$. Then we can get that $\min_{1\leq i<j\leq n}d(V_i,V_j)>\delta$. Besides, let $U_1,U_2,...,U_m$ be nonempty open subsets in $X$. For each $U_l$, since $(X,f)$ is mixing, there exists a common integer $N_l\in \mathbb{N}$ such that for any $k>N_l$ satisfying $f^k(U_l)\cap V_i\neq\emptyset$ for all $i=1,2,...,n$, which implies that there are $x_{l1},x_{l2},...,x_{ln}\in U_l$ such that $f^k(x_{li})\in V_i$ for all $i=1,2,...,n$ and all $k>N_l$. And since $ka_ib_i>N_l$ for each $i=1,2,...,n$, then $\min_{1\leq i<j\leq n}d(f^{ka_ib_i}(x_{li}),f^{ka_jb_j}(x_{lj}))>\delta$ for all  $k>N_l$. Take $N=\max\{N_1,N_2,...,N_m\}$. Then for any $k>N$, we have $\min_{1\leq i<j\leq n}d(f^{ka_ib_i}(x_{li}),f^{ka_jb_j}(x_{lj}))>\delta$, that is, $k\in \cap_{i=1}^mN_{f^{a_i}}(U_i,\beta,\delta)$. Therefore, $(X,f)$ is  strongly multi-$\beta$-$n$-sensitive with respect to $\alpha$.

\end{proof}

\section{Conclusion}
This paper focuses on the relationships among the $\beta$-$n$-sensitivity and other forms of sensitivity (transitivity) in discrete dynamical systems. And we get the following results.

\textcircled{1} mixing, \textcircled{2} strong multi-$\beta$-$n$-sensitivity with respect to $\alpha$, \textcircled{3} thickly syndetical transitivity, \textcircled{4} thickly syndetically strong $\beta$-$n$-sensitivity, \textcircled{5} thickly syndetical sensitivity,
\textcircled{6} thickly syndetical $n$-sensitivity, \textcircled{7} semi-strong $\beta$-$n$-sensitivity, \textcircled{8} $M$-system and thickly syndetical sensitivity, \textcircled{9} cofinite sensitivity, \textcircled{10} cofinitely strong $n$-sensitivity,
\textcircled{11} cofinitely strong $\beta$-$n$-sensitivity, \textcircled{12} multi-sensitivity with respect to $\beta$, \textcircled{13} $\beta$-$n$-sensitivity, \textcircled{14} strong $\beta$-$n$-sensitivity, \textcircled{15} $\mathcal{N}$ sensitivity, \textcircled{16} strong multi-transitivity, \textcircled{17} strong multi-sensitivity. We denote locally connected $X$  by $lc$.

Then we have the following  relationships.

$$\textcircled{1} \Rightarrow \textcircled{3}\Rightarrow\textcircled{4}
\overset{\text{+lc}}{\Longleftrightarrow}\textcircled{5}\overset{\text{+lc}}{\Longrightarrow}\textcircled{6}\Rightarrow\textcircled{7}\Rightarrow\textcircled{13},$$

$$\textcircled{1} \Rightarrow \textcircled{9}\overset{\text{+lc}}{\Longrightarrow}\textcircled{11}
\overset{\text{+lc}}{\Longleftrightarrow}\textcircled{10}\overset{\text{+lc}}{\Longrightarrow}\textcircled{6}\Leftarrow\textcircled{8},$$

$$\textcircled{1} \Rightarrow \textcircled{2}\Rightarrow\textcircled{14}
\Rightarrow\textcircled{13}\overset{\text{+lc}}{\Longleftarrow}\textcircled{12},$$

$$\textcircled{1} \Rightarrow \textcircled{16}\Rightarrow\textcircled{17}
\Rightarrow\textcircled{15}\overset{\text{+lc}}{\Longrightarrow}\textcircled{14},$$

\section{Problem}

Does weakly mixing imply $\beta$-$n$-sensitive?


\end{document}